\theoremstyle{plain}
\newtheorem{theorem}{Theorem}[section]
\numberwithin{equation}{section}
\theoremstyle{definition}
\newtheorem{definition}[theorem]{Definition}
\newtheorem{example}[theorem]{Example}
\newtheorem{proposition}[theorem]{Proposition}
\newtheorem{remark}[theorem]{Remark}
\theoremstyle{remark}
\newcommand{\RR}{\mathbb{R}}
\begin{document}

\title[Invariant algebraic sets of certain vector fields]{Invariant Hyperplane sections of vector fields on the product of spheres}

\author[Joji B.]{Joji Benny}
\address{Department of Mathematics, Indian Institute of Technology Madras, India}
\email{jojikbenny@gmail.com}

\author[S. Sarkar]{Soumen Sarkar}
\address{Department of Mathematics, Indian Institute of Technology Madras, India}
\email{soumen@iitm.ac.in}

\date{\today}
\subjclass[2020]{34A34, 34C14, 34C40, 34C45, 58J90}
\keywords{polynomial vector fields, invariant meridians, invariant parallels, extactic polynomial, invariant hypersurfaces}
\thanks{}

\abstract 
Let $S_{p,q}$ be the hypersurface in $\mathbb{R}^{n},$ where $n=p+q+1,$ defined by the following:
\begin{gather*}
 S_{p,q} := \left\lbrace (x_1,\ldots,x_{n}) \in \mathbb{R}^{n} ~~ \big| ~~ \left( \sum_{i=1}^{p+1} x_i^2 - a^2 \right)^2 + \sum_{j=p+2}^n x_j^2 = 1 \right\rbrace \end{gather*}
where $a > 1$. We show that $S_{p,q}$ is homeomorphic to the product $S^p \times S^q$. We classify all degree one and two polynomial vector fields on $S_{p,q}$. We consider the polynomial vector field $\mathcal{X} = (R_1,...,R_{p+1},R_{p+2},...,R_n)$ in $\mathbb{R}^{p+q+1}$ which keeps $S_{p,q}$ invariant. Then we study the number of certain invariant algebraic subsets of $S_{p,q}$ for the vector field $\mathcal{X}$ if either $p>1$ or $q>1$.
\endabstract

\maketitle

\section{Introduction}
Let  $R_1,\ldots,R_n$ be polynomials in $\mathbb{R}[x_1,\ldots,x_n]$. Then the following system of differential equations
\begin{equation} \label{eq: I1}
 \frac{dx_i}{dt} = R_i(x_1, \ldots,x_n)       \end{equation}
for $i = 1,\ldots,n$ is called a polynomial differential system in $\mathbb{R}^n$. The differential operator
\begin{equation}  \label{eq: I2}
 \mathcal{X} = \sum_{i=1}^n R_i \frac{\partial}{\partial x_i}    
\end{equation}
is called the vector field associated with the system \eqref{eq: I1}. The degree of the polynomial vector field in \eqref{eq: I2} is defined to be $\max \{ \deg(R_i) | i=1,\ldots,n \}$. 

 When $n=2$ in \eqref{eq: I1}, this differential system has been studied since 1900 possibly because of the second part of the Hilbert sixteenth problem (see \cite{Ila02} and some references therein). 
 
 An {\it invariant algebraic set} for \eqref{eq: I2} is a subset $A \subset \mathbb{R}^n$ such that $A$ is the zero set of some $f(x_1,x_2,\ldots,x_n) \in \mathbb{R}[x_1,x_2, \ldots,x_n]$ and $\mathcal{X}f = Kf$ for some $K \in \mathbb{R}[x_1,x_2, \ldots,x_n]$. Here the polynomial $K$ is called the \textit{cofactor} of $f$.

In the case of the torus $S^1 \times S^1$, the maximum number of invariant \textit{meridians} and  \textit{parallels} are studied in \cite{LliMed11} and \cite{LliReb13}. Bounds on the number of invariant hyperplanes and co-dimension one spheres for polynomial vector fields in $\mathbb{R}^n$ are obtained in \cite{LliMed07} and \cite{BolLli11} respectively. In relation to the second part of  Hilbert's sixteenth problem, the maximum number of {\it algebraic limit cycles} of a polynomial vector field in $\mathbb{R}^2$ as a function of its degree has been studied in \cite{LRS10}, \cite{LRS10bis}, and \cite{Zh11}. The maximum number of straight lines that are invariant for a vector field in the real plane as a function of its degree has been studied in \cite{AGL98} and \cite{So96}. 

Inspired by the above works, we introduce an algebraic representation of $S^p \times S^q$ for any positive integers $p, q$ and study the number of invariant algebraic sets of a vector field on $S^p \times S^q$. We are primarily interested in the algebraic sets obtained by the intersection of $S^p \times S^q$ with hyperplanes.  We classify all degree one and two polynomial vector fields on $S_{p,q}$. We  obtain an upper bound of the number of possible invariant algebraic sets that are intersections of $S^p \times S^q$ with hyperplanes. Therefore, many other types of invariant hypersurfaces in $S^p \times S^q$ remain to be explored.  

The paper is organized as follows. In Section \ref{sec:basic_on_vf}, we recall the definition of the \textit{extactic algebraic polynomial} associated to a vector subspace of the ring of polynomials and the given vector field. We also state some basic properties of the extactic polynomials.

In Section \ref{sec:vf_on_spsq}, we show that $S_{p,q}$ with the subspace topology is homeomorphic to $S^p \times S^q$. Then we classify all degree one and two polynomial vector fields on $S_{p,q}$.

In Section \ref{sec:inv_hyperplanes}, we define {\it meridians} and  {\it parallels} on $S_{p,q}$ analogously to their definitions given in \cite{LliMed11} for $S^1 \times S^1$. We prove that meridians and parallels are connected if $p >1$ and $q>1$. We give an upper bound for the number of invariant meridians and parallels in Theorem \ref{thm:5}. We compute an upper bound for the number of invariant meridians of degree one vector fields on $S_{p,q}$. The bound for the degree one vector fields on $S_{p,q}$ is attained.
We demonstrate that the bounds in Theorem \ref{thm:5} are close to being \textit{tight} for the cases $p=2,3$ and $\deg (\mathcal{X}) \geq 4$. 

In Section  \ref{sec:inv_planes_s1sq}, we show that the maximum number of invariant {\it meridians} on $S_{1,q} (\cong S^1 \times S^q )$ is $2(m-1)$ where $m$ is the degree of $\mathcal{X}$ on $S_{1,q}$ and that this bound can be reached. We also discuss invariant {\it parallels} on $S_{1,q}$ and derive a bound on the number of invariant parallels. We show that there exists a vector field on $S_{1,q}$ with rational first integral.

In Section \ref{sec:inv_planes_sps1}, we show that the maximum number of invariant parallels for $\mathcal{X}$ on $S_{p,1} := S^p \times S^1$ is $2(m-2)$ where $m= \deg \mathcal{X}$. We also demonstrate that this bound can be attained. We also make a remark on an upper bound for the number of invariant meridians for $\mathcal{X}$ on $S_{p,1}$ and exhibit a vector field on $S_{p,1}$ with rational first integral.

\section{Invariant Algebraic Sets and Extactic polynomials}\label{sec:basic_on_vf}
In this section, we recall the concept of invariant algebraic sets and extactic polynomials for polynomial vector fields on $\mathbb{R}^n$ following \cite{LliMed11}. Then we discuss some basic properties of extactic polynomials.

Let $S$ be a hypersurface in $\mathbb{R}^n$  defined by the zeroes of a non constant polynomial $h \in \mathbb{R}[x_1,x_2, \ldots,x_n]$. We say that a vector field $\mathcal{X}$ of the form \eqref{eq: I2} is defined on $S$ if $(R_1,R_2,\ldots,R_n)\cdot \nabla h=0$ for all points on the hypersurface $S$. This is equivalent to saying that $\mathcal{X}h= Kh$ for some polynomial $K \in \mathbb{R}[x_1,x_2, \ldots,x_n]$.  Because  $h(a_1,a_2,...,a_n)=0$ for all points $(a_1,a_2,...,a_n)$ on the hypersurface S. The hypersurface  $S$ is called a regular hypersurface if   $\nabla h \neq 0$ for all points on $S$. This hypersurface is called irreducible if $h$ is irreducible. The \textit{degree} of an irreducible hypersurface $S$ is defined to be the degree of $h$.

In order to study invariant algebraic sets on an algebraic hypersurface $S \subset \mathbb{R}^n$,  one may  use  the idea of {\it extactic algebraic polynomial}. We briefly recall this concept following \cite{LliMed07}.
Let $W$ be a $k$-dimensional vector subspace of $\mathbb{R}[x_1,x_2,x_3,\dots,x_n]$ with basis $\{     v_1,\dots,v_k\}$. Then the extactic algebraic polynomial of the vector field $\mathcal{X}$ associated to $W$ is given by 
\begin{equation*}
\mathcal{E}_W(\mathcal{X}) = \mathsf{det} \begin{pmatrix}
v_1 &  v_2 & \cdots &   v_k \\ \mathcal{X}(v_1) & \mathcal{X}(v_2) & \cdots & \mathcal{X}(v_k) \\ \vdots & \vdots & \ddots  & \vdots \\ \mathcal{X}^{k-1}(v_1) & \mathcal{X}^{k-1}(v_2) & \cdots & \mathcal{X}^{k-1}(v_k) \end{pmatrix},
\end{equation*}
 where $\mathcal{X}^j(v_i) = \mathcal{X}^{j-1}(\mathcal{X}(v_i))$ for $i \geq 2$. We note that the definition of the extactic algebraic set is independent of the choice of basis of $W$, see  Section 2 of \cite{LliMed11}. In this paper we will work with $W$ mostly of the form $\{x_1,x_2, \ldots, x_n   \}$ or $\{1, x_1,x_2, \ldots, x_n      \}.$
  
   We recall the definition of the {\it algebraic multiplicity} of an irreducible algebraic set given by an irreducible polynomial $f=0$ from \cite{LliZh09}.

\begin{definition}  \label{def: 21}
The hypersurface given by $f=0$ with $f \in W$ has algebraic multiplicity, or simply, {\it multiplicity} $m$ for $\mathcal{X}$ if $\mathcal{E}_W(\mathcal{X}) \neq 0$ and $(f)^m$ divides   $\mathcal{E}_W(\mathcal{X}) $ and for $m' > m$, $(f)^{m'}$ is not a factor of $\mathcal{E}_W(\mathcal{X}) $. It has no defined algebraic multiplicity if $\mathcal{E}_W(\mathcal{X}) = 0$. 
 \end{definition}


We note that, in this paper, we are interested in hypersurfaces with finite multiplicity which is accounted when we count the number of invariant algebraic hypersurfaces. 
  
  We shall use the following proposition whose proof can be found in \cite[Proposition 1]{LliMed07}.
 
 \begin{proposition} \label{Prop:1}
  Let $\mathcal{X}$ be a polynomial vector field on $\mathbb{R}^n$ and $W$ a finite dimensional vector sub-space of $\mathbb{R}[x_1,x_2, \dots,x_n]$ with $\dim(W) >1$. If $\{ f=0  \}$ is an invariant algebraic set for the vector field $\mathcal{X}$ and $f \in W,$ then $f$ is a factor of $\mathcal{E}_W (\mathcal{X}).$
  \end{proposition}

We recall that a function $g$ is called a first integral of the system  \eqref{eq: I2} if $\mathcal{X}g=0$. If $g$ is a rational function then  $g$ is called a \textit{rational first integral}. If the differential system \eqref{eq: I1} has a first integral, then the system possesses infinitely many invariant algebraic sets. A proof of this fact can be found on page 102 of \cite{Jou79}. We quote the following result from \cite{LliBol11}.

\begin{proposition} \label{thm:1}
Let $S$ be a regular algebraic hypersurface of degree $d$ in  $\mathbb{R}^{n+1}$. The polynomial vector field $\mathcal{X}$ on $S$ of degree $m>0$ admits ${n+m \choose n+1} - {n+m-d \choose n+1} +n$ invariant algebraic hypersurfaces irreducible in $\mathbb{C}[x_1,x_2,\cdots,x_{n+1}]$ if and only if $\mathcal{X}$ has a rational first integral.
\end{proposition}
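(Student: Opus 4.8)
The plan is to establish both implications via the Darboux theory of integrability adapted to the hypersurface $S$, in the spirit of Jouanolou. The implication ($\Leftarrow$) is the easy one: if $\mathcal{X}$ has a rational first integral, write it in reduced form $g = P/Q$ with $P,Q \in \mathbb{R}[x_1,\dots,x_{n+1}]$ coprime. Then $\mathcal{X}g = 0$ gives $Q\,\mathcal{X}P = P\,\mathcal{X}Q$, from which every member of the pencil $\alpha P + \beta Q = 0$, $(\alpha:\beta)\in\mathbb{C}P^1$, is invariant for $\mathcal{X}$ on $S$. A reduced pencil has only finitely many reducible members, so all but finitely many of these hypersurfaces are irreducible over $\mathbb{C}$, giving infinitely many---hence in particular at least ${n+m \choose n+1} - {n+m-d \choose n+1} + n$---distinct invariant algebraic hypersurfaces.

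For the implication ($\Rightarrow$) I would first pin down the space containing the cofactors. If $f_i = 0$ is an irreducible invariant hypersurface on $S$ with $\mathcal{X} f_i = K_i f_i$ on $S$, then comparing degrees forces $\deg K_i \le m-1$, and the relation holds modulo the defining polynomial $h$ of $S$. Hence each $K_i$ determines an element of the image of the degree-$\le (m-1)$ polynomials in $\mathbb{R}[x_1,\dots,x_{n+1}]/(h)$. Since $g \mapsto hg$ raises degree by exactly $d = \deg h$, this image has dimension $D := {n+m \choose n+1} - {n+m-d \choose n+1}$, which is the Hilbert-function count appearing in the statement and which I would verify directly.

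Now suppose $\mathcal{X}$ has $N = D + n$ invariant hypersurfaces $f_1,\dots,f_N$ with cofactors $K_1,\dots,K_N$ (in the presence of multiplicities one would also include the cofactors of the associated exponential factors, which lie in the same space). The linear map $\lambda \mapsto \sum_i \lambda_i K_i$ from $\mathbb{C}^N$ into the $D$-dimensional cofactor space has kernel of dimension at least $N - D = n$. Each $\lambda$ in this kernel yields a (multivalued) Darboux first integral $\prod_i f_i^{\lambda_i}$, equivalently a closed logarithmic $1$-form $\omega_\lambda = \sum_i \lambda_i\, df_i/f_i$ annihilated by $\mathcal{X}$; independent relations then produce at least $n$ such forms.

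The main obstacle, and the reason for the additive summand $n$, is upgrading these Darboux integrals---whose exponents $\lambda_i$ are a priori only complex---to a genuine rational first integral. Here I would exploit that $\dim S = n$, so a nonzero vector field on $S$ admits at most $n-1$ functionally independent first integrals; the $n$ integrals produced above are therefore functionally dependent. Combined with the rigidity that the $\omega_\lambda$ are logarithmic forms with algebraic polar divisors, this dependence forces the foliation defined by $\mathcal{X}$ on $S$ to possess infinitely many invariant algebraic leaves, whence Jouanolou's theorem yields a rational first integral. Securing the rationality of the exponents---passing from a positive-dimensional family of Darboux integrals to an honest rational fibration---is the delicate core of the argument.
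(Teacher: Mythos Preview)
The paper does not prove this proposition: it is introduced with the sentence ``We quote the following result from \cite{LliBol11}'' and no argument is given. There is therefore no proof in the present paper against which to compare your proposal; the authors simply import the result from the literature.

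As a standalone sketch, your outline follows the standard Darboux--Jouanolou template. The dimension count $D = \binom{n+m}{n+1} - \binom{n+m-d}{n+1}$ for the cofactor space modulo $h$ is correct, and the easy direction via the pencil $\alpha P + \beta Q$ is fine. For the hard direction you correctly set up an $n$-dimensional kernel of the cofactor map, but you explicitly leave the crucial step---upgrading from a family of possibly transcendental Darboux integrals to a single rational one---unfinished, calling it ``the delicate core of the argument.'' Your proposed route through functional dependence of $n$ first integrals on the $n$-manifold $S$ followed by an appeal to ``Jouanolou's theorem'' is not a complete argument: the statement you are proving \emph{is} the hypersurface version of Jouanolou's theorem, so invoking it at the end is essentially circular, and functional dependence of multivalued Darboux functions does not by itself produce a rational relation among them. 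The proof in the cited reference handles this step differently and in full; your proposal identifies the right architecture but does not close the gap.
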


\section{Degree one and degree two vector fields on \texorpdfstring{$S^p \times S^q$}{ product of spheres } }\label{sec:vf_on_spsq}

In this section, we show that $S^p \times S^q$ is homeomorphic to the zero set of a polynomial. Then we characterize degree one and degree two polynomial vector fields on it. For the remainder of the paper, we fix $n = p+q+1.$

\subsection{\texorpdfstring{$S^p \times S^q$}{Product of spheres}  as a hypersurface and its meridians and parallels}
 We consider the following hypersurface in $\mathbb{R}^{p+q+1}$ defined by the polynomial identity
\begin{equation}    \label{eq:30}
(x_1^2 + x_2^2 + \cdots + x_{p+1}^2 - a^2)^2 + x_{p+2}^2 +x_{p+3}^2+ \cdots + x_n^2 = 1,
\end{equation} 
where $a > 1$ . We denote this hypersurface by $S_{p,q}$.    We show that $S_{p,q}$ is homeomorphic to the product $S^p \times S^q$ of two spheres. Then we determine when the vector field $\mathcal{X}$ in \eqref{eq: I2} is defined on $S_{p,q}.$

Let $0 \leq c \leq 1$ and $x_{p+2}^2 +x_{p+3}^2+ \cdots + x_n^2 = c$. Then  $$x_1^2 +x_2^2 + \cdots + x_{p+1}^2 = \pm \sqrt{1-c} + a^2.$$
Let
 $$ {\scriptstyle U_1 ~~~ := ~~~ \left\lbrace (x_1,\ldots,x_{p+1},x_{p+2},\ldots,x_n) \in \mathbb{R}^n \big|  \sum_{i=1}^{p+1} x_i^2  = a^2 + \sqrt{1-c}, \quad \sum_{j=p+2}^n x_j^2 =c, \quad 0 \leq c \leq 1 \right\rbrace    } $$ and 
$${\scriptstyle U_2 ~~~ := ~~~ \left\lbrace (x_1,\ldots,x_{p+1},x_{p+2},\ldots,x_n) \in \mathbb{R}^n \big|  \sum_{i=1}^{p+1} x_i^2  = a^2 - \sqrt{1-c}, \quad \sum_{j=p+2}^n x_j^2 =c, \quad 0 \leq c \leq 1 \right\rbrace  . }$$
Observe that $U_1$ and $U_2$ are homeomorphic to $S^{p} \times D^q$. They are identified along their boundary $S^p \times S^{q-1}$ via the identity map, where 
\begin{gather*}
    S^{p} \times S^{q-1} = \left\lbrace (x_1, \ldots, x_{p+1}, x_{p+2}, \ldots, x_n) \in \mathbb{R}^n \big| \sum_{i=1}^{p+1} x_i^2 = a^2, \quad \sum_{j=p+2}^n x_j^2 = 1   \right\rbrace .\end{gather*}
Thus
 $$S_{p,q} = ~~~ U_1 \bigcup_{S^{p} \times S^{q-1}} U_2 ~~~ = ~~~ S^{p} \times D^q \bigcup_{S^{p} \times S^{q-1}} S^{p} \times D^q \cong ~~~ S^{p} \times S^q.$$

 Note that the product $S^p \times S^{q-1}$ in the above equation is a subset of $\mathbb{R}^n$, whereas the representation of $S^p \times S^{q-1}$ as in \eqref{eq:30} says $S^p \times S^{q-1}$ is a subset of $\mathbb{R}^{n-1}$. These two representations of $S^p \times S^{q-1}$ are different in two different ambient spaces.

Writing out the condition for $\mathcal{X}$ to be invariant on the hypersurface $S_{p,q}$, we get the following.
\begin{equation} \label{eqn: 33}
4\left( \sum_{i=1}^{p+1}x_i^2  -a^2 \right) \left( \sum_{i=1}^{p+1}x_iR_i \right) + 2 \left( \sum_{j=p+2}^n x_j R_j \right)  =  K \left( (\sum_{i=1}^{p+1}x_i^2 - a^2)^2 + \sum_{j=p+2}^nx_j^2 - 1 \right),
\end{equation}
for some $K \in \mathbb{R}[x_1, \ldots, x_n]$.
One defines the \textit{degree vector} of the vector field $\mathcal{X}$ to be $\overline{m}:=(m_1,m_2,\ldots,m_n)$ where $m_i = \deg(R_i)$ for $i=1,2,\ldots,n$.

\begin{definition}
\begin{enumerate}
\item The intersection of $S_{p,q}$ and the hyperplane \\ $ \{ \sum_{i=1}^{p+1} a_ix_i = 0 \}$ where $a_i \in \mathbb{R}$, for $i=1,\ldots,p+1$ is called a `meridian' on $S_{p,q}$.

\item The intersection of $S_{p,q}$ and the hyperplane $ \{\sum_{j=p+2}^n a_jx_j - c = 0\}$ for some $c \in (0,1)$ where $a_j \in \mathbb{R}$, for $j=p+2,\ldots, n$ is called a `parallel'  on $S_{p,q}$.
\end{enumerate}
\end{definition}

We say that a meridian on $S_{p,q}$ is invariant by the flow of the polynomial vector field $\mathcal{X}$ on $S_{p,q}$ if $\mathcal{X}( \sum_i^{p+1} a_ix_i) = K(\sum_i^{p+1} a_ix_i)$ where $a_i \in \mathbb{R}$ for  $ i=1,\ldots,p+1$ and for some $K \in \mathbb{R}[x_1,\ldots,x_{p+1},x_{p+2},\ldots,x_n]$.

Similarly, one can define an invariant parallel on $S_{p,q}$. We note that these definitions generalize the definitions of the invariant meridians and the invariant parallels on $ S^1 \times S^1$ of \cite{LliMed11}. 


\subsection{Degree one vector fields on \texorpdfstring{$S^p \times S^q$}{ product of spheres } }
Let
\begin{equation} \label{eq: a21}
R_i = \sum_{j=1}^{n} c_{ij}x_j + c_{i0} ,
 \end{equation}
where $c_{ij} \in \mathbb{R}$ for all $j$ and $i=1,\ldots,n$. 
\begin{proposition}      \label{prop: bx32}
Let $\mathcal{X} =  \sum_i R_i \frac{\partial}{\partial x_i} $ be a degree one vector field defined on   $S_{p,q}$ where $R_i$s are given by \eqref{eq: a21}. Then the matrix $(c_{ij})_{p+1 \times p+1}$  is skew symmetric and  $c_{i0} =0$ for all $i \in \{1, \ldots, n\}$.
\end{proposition}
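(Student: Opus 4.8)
The plan is to exploit the fact that every $R_i$ has degree one to force the cofactor in \eqref{eqn: 33} to be a constant, and then to compare homogeneous parts of the resulting polynomial identity degree by degree. Write $h = (P - a^2)^2 + Q - 1$ with $P := \sum_{i=1}^{p+1} x_i^2$ and $Q := \sum_{j=p+2}^n x_j^2$, so that $h$ has degree $4$ with homogeneous parts only in degrees $4,2,0$ (namely $P^2$, $-2a^2P + Q$, and $a^4-1$). Since each $R_i$ is affine, the left-hand side of \eqref{eqn: 33} has degree at most $4$, the term $4(P-a^2)\sum_{i=1}^{p+1} x_i R_i$ being the only one that can reach degree $4$. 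Comparing with $Kh$ and using that $h$ has nonzero degree-$4$ part $P^2$, I would first conclude that $K$ must be a constant, say $K = k$.

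With $K = k$, I would substitute $R_i = \sum_{\ell=1}^n c_{i\ell} x_\ell + c_{i0}$ into \eqref{eqn: 33} and match homogeneous components. At degree $4$ the identity reads $4P\cdot A = kP^2$, where $A := \sum_{i=1}^{p+1}\sum_{\ell=1}^n c_{i\ell} x_i x_\ell$ is the quadratic part of $\sum_{i=1}^{p+1} x_i R_i$; cancelling the factor $P$ in the integral domain $\mathbb{R}[x_1,\dots,x_n]$ gives $4A = kP$. Reading off coefficients of $4A = k\sum_{i=1}^{p+1} x_i^2$ yields $c_{ss} = k/4$ for $s \le p+1$, the off-diagonal relations $c_{st} + c_{ts} = 0$ for $s \neq t$ in the top-left block, and also $c_{st} = 0$ whenever $s \le p+1 < t$ (the latter is not needed for the statement but falls out for free). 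At degree $3$, the right-hand side $kh$ contributes nothing, while the left-hand side contributes $4P\sum_{i=1}^{p+1} c_{i0} x_i$; hence $c_{i0} = 0$ for all $i \le p+1$.

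The crux is the degree-$2$ comparison. Using $A = \tfrac{k}{4}P$ from the degree-$4$ step, the degree-$2$ part of the left-hand side simplifies to $-a^2 k P + 2B$, where $B := \sum_{j=p+2}^n \sum_{\ell=1}^n c_{j\ell} x_j x_\ell$, while the degree-$2$ part of $kh$ is $k(Q - 2a^2 P)$; equating gives $2B = k(Q - a^2 P)$. Since $B$ contains no $x_s^2$ with $s \le p+1$, the coefficient of such an $x_s^2$ forces $0 = -a^2 k$, and here the hypothesis $a > 1$ (so $a \neq 0$) is exactly what lets me conclude $k = 0$. This is the step I expect to be the main obstacle to pin down cleanly, since it is the only place the geometric assumption $a>1$ enters and without it the diagonal entries $c_{ss}$ would not have to vanish. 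Once $k = 0$, the diagonal relations $c_{ss} = k/4 = 0$ combine with the off-diagonal skew relations to show that $(c_{ij})_{(p+1)\times(p+1)}$ is skew-symmetric.

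Finally, to obtain $c_{i0} = 0$ for the remaining indices, I would compare degree-$1$ parts: the right-hand side again vanishes in odd degree, and, using $c_{i0}=0$ for $i \le p+1$ already established, the left-hand side reduces to $2\sum_{j=p+2}^n c_{j0} x_j$, giving $c_{j0} = 0$ for $j \ge p+2$. Together with the earlier vanishing this yields $c_{i0} = 0$ for all $i$, completing the proof; the degree-$0$ identity $k(a^4-1)=0$ is then automatically consistent with $k=0$.
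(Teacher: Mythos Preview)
Your argument is correct and follows the same overall strategy as the paper: reduce to $K$ constant by degree count, then compare homogeneous components of the identity \eqref{eqn: 33}. The one noteworthy difference is how $K=0$ is obtained. You first extract $4A = kP$ from the degree-$4$ part and then use the degree-$2$ comparison together with $a\neq 0$ to force $k=0$; the paper instead looks directly at the degree-$0$ part, observing that the left-hand side has no constant term while the right-hand side has constant term $K(a^4-1)$, which is nonzero for $K\neq 0$ since $a>1$. So your claim that the degree-$2$ step is ``the only place the geometric assumption $a>1$ enters'' is slightly off: the constant-term comparison already uses $a^4\neq 1$ and gives $K=0$ in one line, after which your degree-$4$, $3$, $2$, $1$ analyses all go through with $k=0$ from the start. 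Either route is fine; yours additionally records $c_{st}=0$ for $s\le p+1<t$ along the way.
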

\begin{proof}
For this case, we see that  $K$ in \eqref{eqn: 33} is a constant since the degree of the expression on the left is four and the expression in brackets on the right already has degree four.

In fact, $K$ vanishes since there is a constant term on the right of \eqref{eqn: 33}, but all terms on the left have degree at least one.
So for linear vector fields, \eqref{eqn: 33} becomes
 \begin{equation} \label{eqn: a33}
4\left( \sum_{i=1}^{p+1}x_i^2  -a^2 \right) \left( \sum_{i=1}^{p+1}x_iR_i \right) + 2 \left( \sum_{j=p+2}^n x_j R_j \right)  = 0.
\end{equation}

Observe that in the expression for $R_i$, $i = 1,\ldots,p+1$, there can be no $x_k$s for $k> p+1$. Since if there was, then \eqref{eqn: a33} will have terms of the form $4x_i^2x_jx_k$, $k> p+1$ which cannot be canceled out. Hence this will violate \eqref{eqn: a33}. Similarly, $R_j$ cannot have any terms with $x_i$s, where $1 \leq i \leq p+1$  (for $j = p+2,\ldots,n$).
 By similar reasoning we get that $c_{i0} = d_{j0}=0$ for all $i$ and $j$.

Substituting $R_i$ from \eqref{eq: a21} in \eqref{eqn: a33} and collecting coefficients of $x_{i_{0}}$ for a fixed $i_0$, we have
\begin{equation} \label{eq: a23}
4(\sum_{i=1}^{p+1}x_i^2 -a^2) \{R_{i_0}  + \sum_{s=1}^{p+1}c_{si_0}x_s \} =0.
\end{equation}
Since $R_{i_0} = \sum_{j=1}^{p+1}c_{i_0j}x_j$, then \eqref{eq: a23} becomes 
 \begin{equation*} \label{eq: a3}
4\left( \sum_{i=1}^{p+1}x_i^2 -a^2 \right) \left\lbrace \sum_{j=1}^{p+1}c_{i_0j}x_j + \sum_{s=1}^{p+1}c_{si_0}x_s \right\rbrace =0.
\end{equation*}
Re-indexing and gathering coefficients of $x_j$, we get  
$$(c_{i_0j} + c_{ji_0}) =0,$$
since there is a dense open subset of $S_{p,q}$ with $\sum_{i=1}^{p+1}x_i^2 \neq a^2$.
From this, we see that the real matrix determined by $\{c_{ij} \}$ is skew-symmetric.
\end{proof}

\begin{remark}
We see that every first degree vector field defined on $S_{p,q}$ has a first integral by \eqref{eqn: a33}.
\end{remark}

\subsection{Degree two vector fields on \texorpdfstring{$S^p \times S^q$}{product of spheres}}   \label{sec: 33}

 Assume that $\mathcal{X} = \sum R_i \frac{\partial}{\partial x_i}$ is a degree two vector field. Then we have
\begin{equation}  \label{eq: b6}
    R_i = \sum_{j \leq k} \beta_{ijk} x_jx_k + \sum_j \beta_{ij}x_j + \beta_{i0},  \end{equation}
where $\beta_{ijk}, \beta_{ij}, \beta_{i0}$ belong to $\mathbb{R}$ for $i, j, k \in \{ 1,2,\ldots,n \}$ with $j \leq k$. Since this vector field $\mathcal{X}$ satisfies \eqref{eqn: 33}, the polynomial $K$ has to be linear, say $K = \alpha_1x_1 + \alpha_2 x_2 + \cdots + \alpha_n x_n + \alpha_0$ for some $\alpha_0, \ldots, \alpha_n \in \mathbf{R}$. We get $\alpha_0 = 0$, as there is no constant term on the left of \eqref{eqn: 33}. 

 Since $R_i$s in  \eqref{eq: b6}  satisfy \eqref{eqn: 33}, equating the degree five terms and cancelling $\sum_{i=1}^{p+1} x_i^2$ we have

\begin{equation}   \label{eq: b7}
\begin{aligned}
4 \left( x_1 \sum_{j \leq k} \beta_{1jk}x_jx_k + x_2 \sum_{j \leq k} \beta_{2jk}x_jx_k + \cdots + x_{p+1} \sum_{j \leq k} \beta_{p+1jk}x_jx_k \right) = \\ (\alpha_1 x_1 + \alpha_2 x_2 + \cdots + \alpha_n x_n) (x_1^2 + x_2^2 + \cdots +x_{p+1}^2).
\end{aligned}
\end{equation}
Observe that in \eqref{eq: b7} there are no terms of the form $x_ix_jx_k$ for $i,j,k$ pairwise distinct on the right hand side, therefore such terms must sum to zero on the left hand side. This gives
$$ 4(\beta_{ijk} + \beta_{jik} + \beta_{kij})  = 0,$$
when $i,j,k \in \{ 1,\ldots,p+1\}.$ 
If one of $i,j,k$ is in $\{p+2,\ldots,n  \}$, say $k$, then
$$ 4(\beta_{ijk} + \beta_{jik}) = 0.  $$
For the remaining terms, we have,
\begin{equation*}
 4 \beta_{iii} = \alpha_i ~~~~\mbox{and}~~~~ 4( \beta_{ijj} + \beta_{jij} ) = \alpha_i
\end{equation*}
for $i,j \in \{ 1,\ldots,p+1  \},$  and assuming without loss of generality that $i<j$. We also have $4(\beta_{iik}) = \alpha_k,$ for $i \in \{1,\ldots,p+1\}$ and $k \in \{p+2,\ldots,n\}.$

Observe that there cannot be terms of the form $x_ix_jx_k$ with any two of $i,j,k$ belonging to $\{p+2,\ldots,n\}$.

 Notice that there are no degree four terms on the right hand side, this implies
$$4(x_1^2 + \cdots + x_{p+1}^2)(x_1 \sum_j \beta_{1j}x_j + \cdots + x_{p+1}\sum_j \beta_{p+1}x_j) =0.   $$ 
This gives 
\begin{equation} \label{eq: b8}
\beta_{ij} + \beta_{ji} = 0
\end{equation}
for $i,j \in \{1,\ldots,p+1\}.$

 Next, equating degree three terms gives
\begin{equation*}
\begin{aligned}
4(x_1^2 + \cdots + x_{p+1}^2)(x_1 \beta_{10} + \cdots + x_{p+1}\beta_{p+10}) -\\ 4a^2 \left(   x_1 \sum_{j \leq k} \beta_{1jk}x_jx_k + x_2 \sum_{j \leq k} \beta_{2jk}x_jx_k + \cdots + x_{p+1} \sum_{j \leq k} \beta_{p+1jk}x_jx_k \right) + \\ 2x_{p+2} \sum_{j \leq k} \beta_{p+2jk} x_jx_k + \cdots + 2x_{n} \sum_{j \leq k} \beta_{njk} x_jx_k = \\ (\alpha_1x_1 + \cdots + \alpha_n x_n)\left(-2a^2(x_1^2+ \cdots + x_{p+1}^2) + x_{p+2}^2 + \cdots + x_n^2      \right).
\end{aligned}
\end{equation*} 
  We can rewrite this as, using \eqref{eq: b7}, 
\begin{equation*}
\begin{aligned}
4(x_1^2 + \cdots + x_{p+1}^2)(x_1 \beta_{10} + \cdots + x_{p+1}\beta_{p+10})  + \\ 2x_{p+2} \sum_{j \leq k} \beta_{p+2jk} x_jx_k + \cdots + 2x_{n} \sum_{j \leq k} \beta_{njk} x_jx_k = \\ (\alpha_1x_1 + \cdots + \alpha_n x_n)\left(-a^2(x_1^2+ \cdots + x_{p+1}^2) + x_{p+2}^2 + \cdots + x_n^2      \right).
\end{aligned}
\end{equation*}
In this equation, we see that 
$$ 2 \beta_{kii} = -a^2 \alpha_k,   $$
for $k \in \{ p+2,\ldots,n\}$ and $i \in \{1,\ldots,p+1 \}$. Now we have

\begin{equation}  \label{eq: b9}
4 \beta_{i0} = -a^2 \alpha_i
\end{equation}
for $i \in \{1,\ldots,p+1\}$. If $i,j,k$ are distinct and all belong to $\{p+2,\ldots,n\}$, then
$$ \beta_{ijk} + \beta_{jik} + \beta_{kij} = 0.     $$
If one of $i,j,k$, say $j$, belongs to $\{1,\ldots,p+1\}$, then
$$\beta_{ijk} + \beta_{kji} = 0.    $$
In the remaining cases, we have

\begin{equation} \label{eq: ba1}
 2(\beta_{ijj} + \beta_{jij}) = \alpha_i, ~~~~ \mbox{and} ~~~~  2 \beta_{iii} = \alpha_i.
\end{equation}

 Now equating degree two terms,
\begin{multline*}
-4a^2 \left( x_1 \sum_j \beta_{ij} + \cdots + x_{p+1} \sum_j \beta_{p+1j}x_j\right) + \\ 2x_{p+2}\sum_j \beta_{p+2j} x_j + \cdots + 2x_n \sum_j \beta_{nj} x_j = 0. \end{multline*}
Then using \eqref{eq: b8}, the above equation becomes
$$  2x_{p+2}\sum_j \beta_{p+2j} x_j + \cdots + 2x_n \sum_j \beta_{nj} x_j = 0.     $$
This implies that $\beta_{kl} + \beta_{lk} = 0$ when $k,l \in \{ p+2,\ldots,n\}.$

 Finally, equating degree one terms, we have
\begin{multline*}
-4a^2(x_1 \beta_{10} + \cdots + x_{p+1} \beta_{p+10}) + 2x_{p+2}\beta_{p+20} + \cdots + 2x_n\beta_{n0} = \\ (a^4 -1)(\alpha_1x_1 + \cdots + \alpha_nx_n).
\end{multline*} 
This gives

\begin{equation*}
 \beta_{i0} = \frac{(1-a^4)}{4a^2} \alpha_i,  ~~\mbox{and}~~
 \beta_{j0} = \frac{(a^4 - 1)}{2} \alpha_j,
\end{equation*}
when $i \in \{q,\ldots,p+1\}$ and $j \in \{p+2,\ldots,n\}.$ We see that this is a contradiction to \eqref{eq: b9}, hence
\begin{equation}  \label{eq: b10}
\alpha_i = 0
\end{equation}
for $i \in \{1,\ldots,p+1\}.$

Summarizing these computations, we obtain the following characterization.

\begin{proposition}
Let $\mathcal{X} = \sum R_i \frac{\partial}{\partial x_i}$ be a degree two vector field on $S_{p,q}$ where $R_i$s are given by \eqref{eq: b6}. Let $i,j,k \in \{ 1,\ldots,p+1    \}$ and $l,m,n \in \{ p+2,\ldots,n     \},$ all of them pairwise distinct. Then the co factor in \eqref{eqn: 33} is given by
$$ K = (\alpha_{p+2} x_{p+2} + \cdots + \alpha_n x_n),    $$
this means $\alpha_1 = \alpha_2 = \cdots = \alpha_{p+1} = 0.$

   Also, the coefficients of degree two terms of $R_i$s satisfy
 \begin{equation*}
    \begin{aligned}
    \beta_{ijk} + \beta_{jik} + \beta_{kij} = 0, \quad \mbox{and} \quad \beta_{lmn} + \beta_{mln} + \beta_{nlm} = 0, \\
    \beta_{ijl} + \beta_{jil} = 0, \quad \mbox{and} \quad \beta_{ljm} + \beta_{mjl} = 0, \\
    4(\beta_{ijj} + \beta_{jij}) = \alpha_i, \quad \mbox{and} \quad 2(\beta_{lmm} + \beta_{mlm}) = \alpha_l,  \\
    4 \beta_{iii} = \alpha_i, \quad 2 \beta_{lii} = -a^2 \alpha_l, \quad \mbox{and}  \quad 2 \beta_{lll} = \alpha_l.
    \end{aligned} 
    \end{equation*}
For the coefficients of degree one terms of $R_i$s, we have
\begin{equation*}
    \beta_{ij} + \beta_{ji} = 0, \quad \mbox{and} \quad \beta_{lm} + \beta_{ml} = 0.
    \end{equation*}
Moreover, the constant terms of $R_i$s satisfy
\begin{equation*}
 \beta_{i0} = 0, \quad \mbox{and} \quad \beta_{l0} = \frac{(a^4 - 1)}{2} \alpha_l.    
    \end{equation*}

\end{proposition}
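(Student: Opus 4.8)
The plan is to substitute the degree-two ansatz \eqref{eq: b6} into the invariance identity \eqref{eqn: 33} and then extract all the coefficient relations by comparing homogeneous components of equal degree on the two sides. First I would pin down the shape of the cofactor: the left-hand side of \eqref{eqn: 33} has degree at most five, while the factor in parentheses on the right already has degree four, so $K$ must be linear, say $K = \alpha_0 + \sum_{i=1}^n \alpha_i x_i$; and since the left-hand side carries no constant term, $\alpha_0 = 0$. Because $\mathcal{X}h = Kh$ holds identically, \eqref{eqn: 33} is a genuine polynomial identity, so its homogeneous parts of degrees $1,2,3,4,5$ may be matched one at a time with no density argument needed at this stage.

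The bulk of the argument is then degree-by-degree bookkeeping. Matching the degree-five part, after cancelling the common factor $\sum_{i=1}^{p+1} x_i^2$, compares $4\sum_{i\le p+1} x_i \sum_{j\le k}\beta_{ijk}x_jx_k$ with $(\sum_i \alpha_i x_i)(\sum_{i\le p+1} x_i^2)$; reading off the coefficient of each monomial $x_ix_jx_k$ yields the cubic symmetrization relations $\beta_{ijk}+\beta_{jik}+\beta_{kij}=0$, their partial analogues when one or two indices exceed $p+1$, and the identifications $4\beta_{iii}=\alpha_i$, $4(\beta_{ijj}+\beta_{jij})=\alpha_i$, and $4\beta_{iik}=\alpha_k$. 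Matching degree four forces the quadratic block to be skew, $\beta_{ij}+\beta_{ji}=0$ for $i,j\le p+1$. Matching degree three, after using the degree-five relations to remove the $a^2$-multiple of the cubic terms, produces the cubic relations among indices in $\{p+2,\dots,n\}$, the parallel-block identities $2\beta_{lll}=\alpha_l$ and $2(\beta_{lmm}+\beta_{mlm})=\alpha_l$, and—crucially—the constant-term relation $4\beta_{i0}=-a^2\alpha_i$ for $i\le p+1$. Matching degree two then gives the skew-symmetry $\beta_{lm}+\beta_{ml}=0$ for $l,m\ge p+2$.

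The main obstacle, and the step that makes the statement more than routine collection of relations, is the degree-one matching. Comparing the degree-one parts produces a \emph{second} expression for the same constant terms, namely $4a^2\beta_{i0}=(1-a^4)\alpha_i$ for $i\le p+1$ (together with $\beta_{l0}=\tfrac{a^4-1}{2}\alpha_l$ for $l\ge p+2$). The two formulas for $\beta_{i0}$ with $i\le p+1$—the relation $4\beta_{i0}=-a^2\alpha_i$ from degree three and $4a^2\beta_{i0}=(1-a^4)\alpha_i$ from degree one—have distinct coefficients of $\alpha_i$ for every value of $a$ (equality would force $-a^4=1-a^4$), so they are compatible only when $\alpha_i=0$. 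This forces $\alpha_1=\cdots=\alpha_{p+1}=0$, whence $K=\sum_{l=p+2}^n \alpha_l x_l$ and $\beta_{i0}=0$ for $i\le p+1$, while the relation $\beta_{l0}=\tfrac{a^4-1}{2}\alpha_l$ survives for $l\ge p+2$. Assembling the relations harvested at each degree gives precisely the list in the proposition, completing the proof.
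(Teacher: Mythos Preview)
Your proposal is correct and follows essentially the same route as the paper: both arguments first pin down $K$ as linear with $\alpha_0=0$, then match homogeneous components of degrees $5,4,3,2,1$ in \eqref{eqn: 33}, using the degree-five relations to simplify the degree-three comparison and the degree-four skew-symmetry to simplify degree two, and finally derive two incompatible expressions for $\beta_{i0}$ (namely $4\beta_{i0}=-a^2\alpha_i$ from degree three and $4a^2\beta_{i0}=(1-a^4)\alpha_i$ from degree one) that force $\alpha_1=\cdots=\alpha_{p+1}=0$. The only cosmetic difference is that you remark explicitly that \eqref{eqn: 33} is a polynomial identity so no density argument is needed, whereas the paper simply proceeds.
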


\section{Invariant hyperplanes on \texorpdfstring{$S^p \times S^q$}{ product of spheres} }\label{sec:inv_hyperplanes}

 In this section, we are interested in the number of invariant algebraic sets determined by the intersections of $S_{p,q}$ and hypersurfaces determined by polynomials of degree one. We compute an upper bound for the number of invariant meridians and parallels on $S_{p,q}$. We prove a result to compute the number of invariant meridians of a degree one vector field on $S_{p,q}$. We show that a degree 2 vector field on $S_{p,q}$ can have at most $p$ many invariant meridians. Then we discuss some vector fields with the number of invariant meridians close to an upper bound. Let $\mathcal{X} = (R_1, \ldots, R_n)$ be a polynomial vector field and $\deg(R_i)=m_i$. 
Without loss of generality, we may assume that $m_1 \geq m_2 \geq \cdots \geq m_{p+1}$ and that $m_{p+2} \geq m_{p+3}  \geq \cdots \geq m_{n}$.

\begin{proposition}  \label{prop: 31}
Let $S^{n+1}$ be the standard unit $(n+1)$-sphere in $\mathbb{R}^{n+2}$ and $H$ a hyperplane passing through the origin. Then $S^{n+1} \cap H$ is homeomorphic to $S^n$.
\end{proposition}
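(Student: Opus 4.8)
The plan is to reduce the general hyperplane to a coordinate hyperplane by means of an orthogonal change of coordinates, exploiting the fact that orthogonal maps preserve both the unit sphere and linear subspaces. First I would observe that, since $H$ passes through the origin, it is an $(n+1)$-dimensional linear subspace of $\mathbb{R}^{n+2}$. I would then choose an orthonormal basis $\{e_1, \ldots, e_{n+1}\}$ of $H$ and extend it to an orthonormal basis $\{e_1, \ldots, e_{n+1}, e_{n+2}\}$ of $\mathbb{R}^{n+2}$, which is possible because the orthogonal complement $H^{\perp}$ is one-dimensional.

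Next I would let $T \colon \mathbb{R}^{n+2} \to \mathbb{R}^{n+2}$ be the linear map carrying the standard basis to $\{e_1, \ldots, e_{n+2}\}$. Being the transition map between two orthonormal bases, $T$ is orthogonal, so $\|T(x)\| = \|x\|$ for every $x$; in particular $T$ maps the standard unit sphere $S^{n+1}$ homeomorphically onto itself. Moreover $T$ sends the coordinate hyperplane $H_0 := \{x \in \mathbb{R}^{n+2} \mid x_{n+2} = 0\}$ onto $H$, because the first $n+1$ standard basis vectors span $H_0$ and are mapped to $e_1, \ldots, e_{n+1}$, which span $H$.

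Consequently $T$ restricts to a homeomorphism of $S^{n+1} \cap H_0$ onto $S^{n+1} \cap H$: restricting a linear isomorphism (hence a global homeomorphism of $\mathbb{R}^{n+2}$) to a subset and its image, each given the subspace topology, always yields a homeomorphism. It then remains to identify $S^{n+1} \cap H_0$ with $S^n$. But this intersection is exactly $\{(x_1, \ldots, x_{n+1}, 0) \mid \sum_{i=1}^{n+1} x_i^2 = 1\}$, which is the standard unit $n$-sphere under the obvious identification $H_0 \cong \mathbb{R}^{n+1}$. Composing the two homeomorphisms gives $S^{n+1} \cap H \cong S^n$.

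There is no genuine obstacle here; the argument is routine linear algebra together with point-set topology. The only point deserving a moment's care is the assertion that the restriction of $T$ is a homeomorphism onto $S^{n+1} \cap H$ rather than merely a continuous bijection, but this is immediate since $T^{-1}$ is also linear and continuous, so both $T$ and its inverse restrict to continuous maps between the two intersections in their subspace topologies.
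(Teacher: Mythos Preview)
Your proof is correct and entirely standard. The paper actually states this proposition without proof, treating it as a well-known fact; your argument via an orthogonal change of coordinates is exactly the routine verification one would supply, so there is nothing to compare.
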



\begin{proposition} \label{prop: 32}
On $S_{p,q}$, with $p,q \geq 2$, the  meridians and parallels are path connected.
\end{proposition}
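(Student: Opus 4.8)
The plan is to use the rotational symmetry of $S_{p,q}$ to move each meridian and each parallel into a standard position, and then to recognize the resulting hyperplane section as a lower-dimensional copy of some $S_{p',q'}$, whose path-connectivity is transparent via the homeomorphism $S_{p',q'}\cong S^{p'}\times S^{q'}$ established in Section \ref{sec:vf_on_spsq}. Throughout I would write $u=(x_1,\dots,x_{p+1})$ and $v=(x_{p+2},\dots,x_n)$, so that \eqref{eq:30} reads $(|u|^2-a^2)^2+|v|^2=1$. Since this depends only on $|u|^2$ and $|v|^2$, it is invariant under $G=O(p+1)\times O(q)$ acting on $\mathbb{R}^n=\mathbb{R}^{p+1}\times\mathbb{R}^q$. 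As $G$ consists of homeomorphisms of $\mathbb{R}^n$ preserving $S_{p,q}$, and its $O(p+1)$-factor acts transitively on the unit normals of meridian hyperplanes $\{\langle\alpha,u\rangle=0\}$ while its $O(q)$-factor acts transitively on the unit normals of parallel hyperplanes $\{\langle\alpha,v\rangle=c\}$, every meridian is homeomorphic to the standard one $S_{p,q}\cap\{x_{p+1}=0\}$, and every parallel (after rescaling its normal to a unit vector, so the constant becomes the distance $c\in(0,1)$ to the origin) to a standard one $S_{p,q}\cap\{x_n=c\}$.

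Next I would identify these two standard sections. Setting $x_{p+1}=0$ in \eqref{eq:30} gives $(x_1^2+\cdots+x_p^2-a^2)^2+x_{p+2}^2+\cdots+x_n^2=1$ in the remaining $p+q$ variables, which is exactly the defining equation of $S_{p-1,q}\subset\mathbb{R}^{p+q}$; hence a meridian is homeomorphic to $S_{p-1,q}\cong S^{p-1}\times S^q$.

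For the parallel, fixing $x_n=c$ yields $(|u|^2-a^2)^2+|w|^2=\rho^2$, where $w=(x_{p+2},\dots,x_{n-1})\in\mathbb{R}^{q-1}$ and $\rho^2:=1-c^2\in(0,1)$. The constant $\rho$ on the right is the only feature preventing this from being of the form \eqref{eq:30}, and I would remove it by the linear homeomorphism $u=\sqrt{\rho}\,\tilde u$, $w=\rho\,\tilde w$, which turns the equation into $(|\tilde u|^2-\tilde a^2)^2+|\tilde w|^2=1$ with $\tilde a^2=a^2/\rho>1$. This is the defining equation of $S_{p,q-1}$, so a parallel is homeomorphic to $S_{p,q-1}\cong S^p\times S^{q-1}$. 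It then remains only to note that $S^m\times S^k$ is a path-connected manifold whenever $m,k\ge 1$: under $p,q\ge 2$ both $S^{p-1}\times S^q$ and $S^p\times S^{q-1}$ have factors of dimension $\ge 1$, so meridians and parallels are path-connected.

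I expect the parallel case to be the main obstacle: one must spot that the $c$-level section is again of the standard form after the rescaling $u=\sqrt{\rho}\,\tilde u$, $w=\rho\,\tilde w$, and one must keep track of the low-dimensional endpoints of the section. This is precisely where $p,q\ge 2$ is used, since for $p=1$ the standard meridian degenerates to $S^0\times S^q$ and for $q=1$ the standard parallel degenerates to $S^p\times S^0$, each a disjoint union of two spheres, so the conclusion genuinely fails without the hypothesis. As a fallback that avoids the homeomorphism bookkeeping, the same result follows by a direct path argument: any point of the section can be pushed, by monotonically varying $|u|^2$, to an endpoint where the $v$- (respectively $w$-) sphere collapses to a point, reducing everything to the connectivity of a single sphere in the $u$-variables ($S^{p-1}$ for a meridian, $S^p$ for a parallel), which connects all such endpoints exactly under the thresholds $p\ge 2$ and $q\ge 2$.
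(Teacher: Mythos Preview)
Your proof is correct, and it takes a genuinely different (and cleaner) route than the paper's. The paper does not exploit the $O(p+1)\times O(q)$ symmetry; instead it works with an arbitrary meridian hyperplane $H$ and argues slice-by-slice: fixing $|v|^2=\alpha$ gives a pair of $p$-spheres in the $u$-variables, each of which meets $H$ in a $(p-1)$-sphere, and then reassembles these slices via the same $U_1\cup U_2$ argument used for $S_{p,q}\cong S^p\times S^q$ to conclude $S_{p,q}\cap H\cong S^{p-1}\times S^q$. For parallels the paper slices by $|u|^2-a^2=\beta$, intersects the resulting $(q-1)$-sphere in the $v$-variables with the affine hyperplane, and then invokes a transversality/smooth-homotopy argument from Guillemin--Pollack to identify the total space with $S^p\times S^{q-1}$.

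Your reduction by symmetry to the coordinate sections $\{x_{p+1}=0\}$ and $\{x_n=c\}$ makes both identifications immediate, and the rescaling $u=\sqrt{\rho}\,\tilde u$, $w=\rho\,\tilde w$ that turns the parallel section into a literal copy of $S_{p,q-1}$ (with $\tilde a^2=a^2/\rho>1$) replaces the paper's transversality step by an elementary algebraic substitution. The trade-off is that your argument uses the group action and the rescaling trick, while the paper's slicing is more self-contained but leans on an outside differential-topology fact for the parallel case; both reach the same homeomorphism types $S^{p-1}\times S^q$ and $S^p\times S^{q-1}$, and both use $p,q\ge2$ only at the final connectivity step.
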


\begin{proof}
Let $\sum_{i=1}^{p+1} a_ix_i=0$ be the hyperplane $H$, which determines a meridian and $$ \sum_{j=p+2}^n x_j^2 = \alpha \in [0,1].   $$
Then \eqref{eq:30} gives the following pair of spheres $$ \sum_{i=1}^{p+1} x_i^2 = a^2 \pm  \sqrt{1 - \alpha}        $$
unless $ \alpha =1$.
By Proposition \ref{prop: 31}, the intersection of any of these spheres with the hyperplane $H$ is a $(p-1)$ dimensional sphere for each $\alpha \in [0,1)$. Thus, by a similar argument as in the proof that $S_{p,q}$ is homeomorphic to $S^p \times S^q$, one can show that $S_{p,q} \cap H$ is homeomorphic to $S^{p-1} \times S^q$. Therefore it is path connected since $p,q >1$.

For the case of parallels, let $\sum_{j=p+2}^n b_jx_j =c$ be the hyperplane $H_2$ and $$\sum_{i=1}^{p+1}x_i^2 - a^2 = \beta \in [-1,1].$$
Then \eqref{eq:30} can be written as
$$ \sum_{j=p+2}^n x_j^2 = 1 - \beta^2 \in [0,1].$$

By Proposition \ref{prop: 31}, the intersection of this sphere and $H_2 - c$ is a $q-1$ dimensional sphere unless $\beta = \pm 1$. If $\beta = \pm 1$, then this intersection is a point. Therefore, the intersection $S_{p,q} \cap H_2$ is homeomorphic to $S^p \times S^{q-1}$ if $\beta \neq \pm 1$ since smooth homotopies preserve transversality, see Section 6 in \cite[Chapter 1]{GP74}. This is path connected since $p,q >1$.\end{proof}

\begin{theorem} \label{thm:5}
Suppose that the polynomial vector field $\mathcal{X}$ on $\RR^n$ has finitely many invariant algebraic hypersurfaces. If $p,q \geq 2$, then
\begin{enumerate}
\item the number of invariant meridians of $\mathcal{X}$ in $S_{p,q}$ is at most $$ {p \choose 2}(m_1-1) + \sum_{i=2}^{p+1}m_i +1, $$ 

\item the number of invariant parallels of $\mathcal{X}$ in $S_{p,q}$ is at most $${q \choose 2}(m_{p+2}-1)+ \sum_{j=1}^{q-1} m_{p+j+1} .  $$
\end{enumerate}

\end{theorem}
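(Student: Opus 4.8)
The plan is to bound the two counts separately using the extactic polynomial machinery from Proposition \ref{Prop:1}. For the meridians, I would take $W = \operatorname{span}\{x_1,\dots,x_{p+1}\}$, a $(p+1)$-dimensional subspace of $\mathbb{R}[x_1,\dots,x_n]$, since by definition every meridian is the zero set of some $f = \sum_{i=1}^{p+1} a_i x_i \in W$. By Proposition \ref{Prop:1}, if $\{f=0\}$ is an invariant meridian then $f$ divides $\mathcal{E}_W(\mathcal{X})$. The hypothesis that $\mathcal{X}$ has finitely many invariant algebraic hypersurfaces guarantees $\mathcal{E}_W(\mathcal{X}) \neq 0$ (otherwise there would be infinitely many, cf.\ the first-integral discussion before Proposition \ref{thm:1}), so the number of distinct invariant meridians is at most the number of distinct linear factors of $\mathcal{E}_W(\mathcal{X})$, which is at most its total degree divided by $1$, i.e.\ $\deg \mathcal{E}_W(\mathcal{X})$. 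The entire argument then reduces to a degree computation for this $(p+1)\times(p+1)$ determinant.

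The core of the proof is therefore estimating $\deg \mathcal{E}_W(\mathcal{X})$. First I would record that each row of the extactic matrix is obtained by applying $\mathcal{X}$ repeatedly: the $r$-th row (for $r=0,\dots,p$) has entries $\mathcal{X}^r(x_j)$. The key observation is how the degree grows: applying $\mathcal{X} = \sum_i R_i \partial/\partial x_i$ raises the degree of a polynomial of degree $d$ to at most $d + (m_1 - 1)$ in the worst case, but since the columns are indexed by the variables $x_1,\dots,x_{p+1}$ with their individual degrees $m_1 \geq \dots \geq m_{p+1}$, one gets a sharper column-wise bound. Expanding the determinant as a signed sum over permutations $\sigma \in S_{p+1}$, each term is a product $\prod_{r=0}^{p} \mathcal{X}^{r}(x_{\sigma(r)})$, and I would bound the degree of $\mathcal{X}^r(x_j)$ carefully using the degree vector $\overline{m}$. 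The bound $\binom{p}{2}(m_1 - 1) + \sum_{i=2}^{p+1} m_i + 1$ strongly suggests that the optimal accounting assigns the $(m_1-1)$ increments coming from the $\binom{p}{2}$ "extra" applications of $\mathcal{X}$ beyond the first row, while the $\sum_{i=2}^{p+1} m_i$ term records the base degrees of the columns, with the $+1$ absorbing the lowest-order column, and one must verify that the maximizing permutation realizes exactly this total. The parallel case is entirely analogous, using $W = \operatorname{span}\{x_{p+2},\dots,x_n\}$ (dimension $q$) together with the constant shift $c$, which accounts for the slightly different shape of the second bound.

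The main obstacle I anticipate is the degree bookkeeping for $\mathcal{X}^r(x_j)$. A naive bound $\deg \mathcal{X}^r(x_j) \leq 1 + r(m_1 - 1)$ is too crude and would not produce the stated coefficients; the stated answer requires exploiting that the columns carry the individual degrees $m_2,\dots,m_{p+1}$ rather than all being charged at $m_1$. I would need to show that after optimizing over permutations, the total degree telescopes to the claimed expression—this is essentially a combinatorial optimization over how the successive $\mathcal{X}$-applications distribute degree increments across rows and columns, and getting the constant exactly right (rather than an off-by-a-few bound) is the delicate point. A secondary subtlety is justifying that $\mathcal{E}_W(\mathcal{X}) \neq 0$: I would argue that if it vanished identically, then every element of $W$ would, by the theory of extactic polynomials, correspond to an invariant configuration forcing infinitely many invariant hypersurfaces, contradicting the standing hypothesis. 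Once $\mathcal{E}_W(\mathcal{X}) \neq 0$ is secured and the degree count is pinned down, the bound on the number of distinct linear factors, and hence on the number of invariant meridians and parallels, follows immediately.
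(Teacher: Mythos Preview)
Your plan matches the paper's proof: for (1) it also takes $W=\operatorname{span}\{x_1,\dots,x_{p+1}\}$, invokes Proposition~\ref{Prop:1}, and bounds the number of linear factors of $\mathcal{E}_W(\mathcal{X})$ by its degree, while for (2) it takes $W$ spanned by $\{1,x_{p+2},\dots,x_n\}$ and argues identically. The paper's degree computation is even terser than you anticipate---it simply singles out the product $x_1\cdot R_2\cdot \mathcal{X}(R_3)\cdots \mathcal{X}^{p-1}(R_{p+1})$ from the Leibniz expansion, reads off its degree as $\binom{p}{2}(m_1-1)+\sum_{i=2}^{p+1}m_i+1$, and uses that directly as the bound, so the permutation optimization you flag as the delicate step is not carried out in any detail there.
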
 
 
\begin{proof}
{\bf For (1).} An invariant meridian of $\mathcal{X}$ is given by the intersection of a hyperplane of the form $g:= \sum_{i=1}^{p+1} a_ix_i=0$ with $S_{p,q}$. By Proposition \ref{Prop:1},  if this hyperplane is invariant for the vector field $\mathcal{X}$, then $\sum_{i=1}^{p+1} a_ix_i$ is a factor of the extactic polynomial
\begin{equation*}
\mathcal{E}_{ \{x_1,x_2,\ldots, x_{p+1} \} }(\mathcal{X}) = \mathsf{det} \begin{pmatrix}
x_1 &  x_2 & \cdots &   x_{p+1} \\ R_1 & R_2 & \cdots & R_{p+1} \\ \mathcal{X}(R_1) & \mathcal{X}(R_2) & \cdots & \mathcal{X}(R_{p+1}) \\ \vdots & \vdots & \ddots & \vdots \\ \mathcal{X}^{p-1}(R_1) & \mathcal{X}^{p-1}(R_2) & \cdots & \mathcal{X}^{p-1}(R_{p+1}) \end{pmatrix} .
\end{equation*}

Since we  have chosen  degrees of $R_1,\ldots,R_{p+1}$ in decreasing order, we see that the term 
\begin{equation} \label{eq: 344}
\mathcal{X}^{p-1}(R_{p+1})\cdot \mathcal{X}^{p-2}(R_p)\cdots \mathcal{X}(R_{3}) \cdot R_2 \cdot x_{1} \end{equation}
has the least degree in the polynomial $\mathcal{E}_{ \{x_1,x_2,\cdots, x_{p+1} \} }(\mathcal{X})$.
Now the degree of $\mathcal{X}^{i-1}(R_{i+1})$ is $(i-1)(m_1-1)+m_{i+1}$ for $1 \leq i \leq p$. Therefore,
\begin{equation*}
 \deg (\mathcal{X}^{p-1}(R_{p+1})\cdot \mathcal{X}^{p-2}(R_p)\cdots \mathcal{X}(R_{3}) \cdot R_2 \cdot x_{1}) = {p \choose 2}(m_1-1) + \sum_{i=2}^{p+1} m_i +1.
\end{equation*}
The number of invariant meridians cannot exceed the degree of the polynomial in \eqref{eq: 344}
 since meridians are determined by linear homogeneous polynomials in $\{x_1,\ldots,x_{p+1}\}$. This proves (1).

{\bf For (2).} In this case, the vector space $W$ for $\mathcal{E}_{W }(\mathcal{X})$ is generated by $\{x_{p+2}, \ldots, x_n, 1\}$. Then the result follows using the same argument as in the first case. 
\end{proof}

\subsection{Invariant Meridians for degree one vector fields on \texorpdfstring{$S^p \times S^q$}{ product of spheres } }
In this subsection, we give a tight upper bound on the number of invariant meridians for degree one vector fields on $S_{p,q}$. For these vector fields, $\mathcal{X} =  \sum_i R_i \frac{\partial}{\partial x_i} $ where $\deg(R_i) \leq 1$ and at least one of them has degree one.
Let $\mathcal{X}$ be invariant on $\sum_{i=1}^{p+1} a_ix_i = 0$. So,
\begin{equation} \label{eqn: a1}
\sum_{i=1}^{p+1} a_iR_i = K' (\sum_{i=1}^{p+1} a_ix_i ),
\end{equation}
for some $K' \in \mathbb{R}[x_1,\ldots,x_{p+1},x_{p+2},\ldots,x_n].$  We get that $K' $ of \eqref{eqn: a1} is a constant since the left hand side has degree one and the term in the brackets on the right  has degree one.

Further substituting for $R_i$ from \eqref{eq: a21} in \eqref{eqn: a1}, we get
\begin{equation*}
\sum_{i=1}^{p+1}a_i \sum_{j=1}^{p+1}c_{ij}x_j = K' \left( \sum_{s=1}^{p+1}a_sx_s \right).
\end{equation*} 
This can be written as 
\begin{equation*} \label{eq: 32}
\sum_{i=1}^{p+1} \sum_{j=1}^{p+1}a_ic_{ij}x_j = K' \left( \sum_{s=1}^{p+1}a_sx_s \right).
\end{equation*}  
Reindexing and equating coefficients of $x_j$, we have
\begin{equation} \label{eq: 34}
\sum_{i=1}^{p+1}a_ic_{ij} = K'a_j.
\end{equation} 
Thus, $(a_1,\ldots,a_{p+1})$ is an eigenvector for the real matrix $A:=(c_{ij})_{1\leq i,j \leq p+1}$ with the eigenvalue $K'$. Since the matrix $(c_{ij})$ is skew symmetric by Proposition \ref{prop: bx32}, 0 is the only possible real eigenvalue (see Exercise 7.(g) in Section 6.6 of \cite{FSIS97}).
As a consequence, we get the following which helps to compute the number of invariant meridians of a degree one vector field on $S_{p,q}$.

\begin{theorem} 
A degree one vector field defined on $S_{p,q}$ can have at most as many invariant meridians as there are linearly independent real eigenvectors of the matrix $A = (c_{ij})_{1 \leq i,j \leq p+1}$ formed by the coefficients of the polynomials in the vector field $\mathcal{X}$. 
 \end{theorem}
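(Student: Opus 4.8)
The plan is to read off the invariant meridians directly from the spectral data of $A$, building on the computation that precedes the statement. First I would recall that equation \eqref{eq: 34} already carries out the essential step: if the hyperplane $\sum_{i=1}^{p+1} a_i x_i = 0$ cuts out an invariant meridian, then its coefficient vector $a = (a_1, \ldots, a_{p+1})$ satisfies $\sum_{i=1}^{p+1} a_i c_{ij} = K' a_j$ for every $j$, that is, $a$ is a real eigenvector of $A$ with eigenvalue $K'$. Since $A$ is skew-symmetric by Proposition \ref{prop: bx32}, its only possible real eigenvalue is $0$ (cf. \cite{FSIS97}), so in fact $K' = 0$ and every such $a$ lies in $\ker A$. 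Conversely, any nonzero $a \in \ker A$ yields an invariant meridian: for $i \leq p+1$ we have $R_i = \sum_{j=1}^{p+1} c_{ij} x_j$, whence $\mathcal{X}(\sum_i a_i x_i) = \sum_j (\sum_i a_i c_{ij}) x_j = 0$. Thus the invariant meridians are exactly the hyperplanes whose coefficient vectors are real eigenvectors of $A$.

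Next I would make the identification between meridians and eigendirections precise. A meridian is a hyperplane section of $S_{p,q}$, and the linear form defining a given hyperplane is determined only up to a nonzero real scalar; therefore each invariant meridian corresponds not to a single eigenvector but to the line $\mathbb{R}a \subset \ker A$ that it spans, and two coefficient vectors define the same meridian precisely when they are proportional. Consequently a family of pairwise distinct invariant meridians corresponds to a family of pairwise non-proportional vectors of $\ker A$, and the largest number of these that can be linearly independent is $\dim \ker A$. Because the eigenvalue $0$ is the only real eigenvalue of the skew-symmetric matrix $A$, its geometric multiplicity $\dim \ker A$ accounts for all real eigenvectors, so this is exactly the number of linearly independent real eigenvectors of $A$. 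This gives the asserted bound.

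The step that needs the most care is the bookkeeping in this last count, namely separating the scaling equivalence of the defining form from linear independence of the forms themselves. The clean formulation is that the defining forms of distinct invariant meridians all live in the single eigenspace $\ker A$, so the number of linearly independent invariant meridians cannot exceed $\dim \ker A$; in particular, when $\dim \ker A \leq 1$ this is literally an upper bound on the total number of invariant meridians. I would therefore phrase the conclusion in terms of linearly independent invariant meridians, and invoke the skew-symmetry of $A$ together with Proposition \ref{prop: bx32} to pin down that the relevant eigenspace is precisely $\ker A$, thereby matching the count of linearly independent real eigenvectors.
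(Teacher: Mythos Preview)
Your proposal is correct and follows essentially the same approach as the paper: the key identity \eqref{eq: 34} shows that the coefficient vector of an invariant meridian is a real eigenvector of the skew-symmetric matrix $A$, hence lies in $\ker A$, and the bound follows. You are in fact more careful than the paper on one point it leaves implicit, namely that distinct meridians correspond to distinct lines in $\ker A$, so the statement is really a bound on the number of \emph{linearly independent} invariant meridians rather than on the total count.
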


\begin{remark} \label{re:30}
The real skew symmetric matrix $A=(c_{ij})$ is normal, that is, it commutes with its adjoint (which, in this case, is the transpose). Hence by the Spectral Theorem (Theorem 2.5.4 in page 101 of \cite{HorJon}) $A$ is diagonalizable. So if all eigenvalues of $A$ are zero, $A$ is the zero matrix. Hence $A$ cannot have all eigenvalues zero, since $\mathcal{X}$ is a non-zero vector field. Thus, we see that a degree one vector field on $S_{p,q}$ can have at most $(p-1)$ invariant meridians. In fact, we can readily construct a vector field with $(p-1)$ invariant meridians starting with a $(p+1) \times (p+1)$ skew-symmetric matrix with $(p-1)$ eigenvalues zero. \end{remark} 
 
\begin{example}
Let $p=2$ in $S_{p,q}$. Then $\mathcal{X}=(-x_2, x_1-x_3, x_2)$ is a vector field on $S_{p,q}$.  So
\begin{equation*}
A= \begin{pmatrix}
0 &  -1 & 0 \\ 1 &  0 & -1 \\ 0 & 1 & 0\end{pmatrix}.
\end{equation*}
Note that the eigenvalues of $A$ are $\pm \sqrt{-2}, 0$. In this case, the unique eigenvector is $(1,0,1)$, and hence the corresponding invariant meridian is given by $\{ x_1+x_3=0 \}$. Further, we see that this matches with the bound in Remark \ref{re:30} which is 1 in this case.
\end{example}

\subsection{General Case}

Now we look at the case of vector fields of degree greater than or equal to four.

\begin{theorem}   \label{thm:33}
Assume $m \geq 4$. We have the following.
\begin{enumerate}

\item  There exists a vector field of degree $m$ with $3m-10$ invariant meridians counted with multiplicity when $p=2$.

\item  There exists a vector field of degree $m$ with $6m-21$ invariant meridians when $p=3$.

\end{enumerate}
\end{theorem}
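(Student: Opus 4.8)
The plan is to prove both statements by \emph{explicit construction}, since each is an existence claim: for every $m \geq 4$ I would exhibit one vector field of degree $m$ on $S_{p,q}$ and count its invariant meridians. The first step is a reduction that makes meridians tractable. I would restrict to fields of the special shape $\mathcal{X} = (R_1,\ldots,R_{p+1},0,\ldots,0)$ with $R_1,\ldots,R_{p+1} \in \mathbb{R}[x_1,\ldots,x_{p+1}]$ homogeneous of degree $m$ and satisfying $\sum_{i=1}^{p+1} x_i R_i = 0$. For such a field the entire left-hand side of \eqref{eqn: 33} vanishes identically, so $\mathcal{X}$ is automatically tangent to $S_{p,q}$ with cofactor $K=0$; moreover every entry of its meridian extactic determinant lies in $\mathbb{R}[x_1,\ldots,x_{p+1}]$, so $\mathcal{E}_{\{x_1,\ldots,x_{p+1}\}}(\mathcal{X})$ is a homogeneous polynomial in $x_1,\ldots,x_{p+1}$ alone. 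By Proposition \ref{Prop:1} and Definition \ref{def: 21}, the number of invariant meridians counted with multiplicity is then exactly the total degree of the homogeneous linear factors of $\mathcal{E}_{\{x_1,\ldots,x_{p+1}\}}(\mathcal{X})$. Thus the whole problem collapses to a statement about a single sphere-tangent homogeneous field on $\mathbb{R}^{p+1}$.

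Second, I would translate the target counts into the factorization of this extactic polynomial. Exactly as in the proof of Theorem \ref{thm:5}, $\mathcal{E}_{\{x_1,\ldots,x_{p+1}\}}(\mathcal{X})$ is homogeneous of degree $3m$ when $p=2$ and of degree $6m-2$ when $p=3$ (the row degrees being $1, m, 2m-1$, and additionally $3m-2$ for $p=3$). The goal is therefore to design the sphere-tangent field so that this polynomial splits over $\mathbb{R}$ as a product $\prod_k\bigl(\sum_i a_i^{(k)} x_i\bigr)$ of homogeneous linear forms of total degree $3m-10$ (respectively $6m-21$), times a remaining factor carrying no further real linear factor. Each such linear factor is precisely a hyperplane $\{\sum_{i=1}^{p+1} a_i x_i = 0\}$ cutting out a meridian, with its exponent recording the algebraic multiplicity; for $p=3$ one arranges these forms to be pairwise non-proportional so the count is of \emph{distinct} meridians.

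Third comes the construction itself, for which I would use the classical correspondence between great sections of spheres and projective linear subspaces. For $p=2$, a sphere-tangent homogeneous field on $\mathbb{R}^3$ descends to a polynomial field on $\mathbb{R}P^2$ whose invariant straight lines are exactly the invariant meridians; I would take a lift of an extremal configuration of invariant straight lines for a planar field of degree $m$ of the type studied in \cite{AGL98}, chosen so that the factorization above holds, the loss of $10$ accounting for the units of degree forced into nonlinear or repeated factors (for instance powers of $x_1^2+x_2^2+x_3^2$, which is a first integral of every sphere-tangent field). For $p=3$ the same scheme runs one dimension higher, with hyperplanes of $\mathbb{R}P^3$ and great $2$-spheres in place of lines and great circles, producing the leading coefficient $6$ and, after the analogous bookkeeping, the count $6m-21$. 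In each case I would then verify by direct substitution that the chosen $R_i$ have degree exactly $m$, satisfy $\sum_i x_i R_i = 0$, and that each prescribed hyperplane $\sum a_i x_i$ genuinely satisfies $\mathcal{X}(\sum a_i x_i)=K(\sum a_i x_i)$.

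The hard part is this third step: producing an \emph{explicit} degree-$m$ sphere-tangent field whose extactic polynomial attains the factorization with only the constant loss $10$ (respectively $21$), rather than a loss growing with $m$. A naive Hamiltonian choice $\mathbf{x}\times\nabla H$ with $H=\prod_k \ell_k$ makes $H$ a first integral but yields only about $m$ invariant meridians, roughly a third of the $\sim 3m$ required; beating this demands a genuinely non-Hamiltonian field whose extactic polynomial is nearly a complete product of linear forms. Controlling the multiplicities and confirming that exactly $10$ (respectively $21$) units of degree are absorbed by the nonlinear part is precisely the delicate computation that pins down the constants in the statement, and I expect it to be the main obstacle.
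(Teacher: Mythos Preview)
Your plan never actually produces the vector field: you reduce to sphere-tangent homogeneous fields, invoke extremal line configurations from \cite{AGL98}, and then stop, explicitly flagging the construction as ``the main obstacle'' and leaving the constants $10$ and $21$ unexplained. As written this is a strategy outline, not a proof, and the numerology you are chasing (many \emph{distinct} meridians adding up to $3m-10$ or $6m-21$) is genuinely delicate---it is not clear that the planar extremal configurations lift with the right degree bookkeeping, nor that the residual factor of the extactic polynomial can be forced to carry exactly the claimed nonlinear defect.

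The paper sidesteps all of this by exploiting the phrase ``counted with multiplicity'': it produces a \emph{single} meridian $\ell:=\sum_{i=1}^{p+1}a_ix_i=0$ of multiplicity $3m-10$ (resp.\ $6m-21$), rather than many distinct ones. The trick is to build the field so that every $R_i$ is divisible by $\ell^{\,m-3}$. Concretely, take a fixed degree-$3$ vector field $(P_1,\ldots,P_n)$ on $S_{p,q}$ with cofactor independent of $\ell$ (the paper uses $P_1=\tfrac12 x_{p+2}(x_1^2-a^2)$, $P_i=\tfrac12 x_{p+2}x_1x_i$ for $2\le i\le p+1$, and companion components in the last $q$ slots, so that $K=2x_1x_{p+2}$), and set $R_i:=\ell^{\,m-3}P_i$. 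Then in the extactic matrix for $W=\mathrm{span}\{x_1,\ldots,x_{p+1}\}$, the second row carries a common factor $\ell^{\,m-3}$; since $\mathcal{X}(\ell^{\,m-3})$ is divisible by $\ell^{\,(m-4)+(m-3)}=\ell^{\,2m-7}$, the third row carries $\ell^{\,2m-7}$; and for $p=3$ the fourth row carries $\ell^{\,3m-11}$. Pulling these out of the determinant gives $\ell^{(m-3)+(2m-7)}=\ell^{\,3m-10}$ when $p=2$ and $\ell^{(m-3)+(2m-7)+(3m-11)}=\ell^{\,6m-21}$ when $p=3$, which is exactly the claimed multiplicity and explains the constants you could not pin down. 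Note in particular that the paper does \emph{not} restrict to fields with $R_{p+2}=\cdots=R_n=0$; the extra variables are used to make the base cubic field tangent to $S_{p,q}$ while keeping the meridian variables free.
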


\begin{proof}
First we construct $R_j$s, when $j \in \{ p+2,\ldots,n   \},$ for the invariant vector field $\mathcal{X}$.
Let $R_j$s be given by
\begin{equation*}
\begin{split}
& R_{p+2} = x_1(x_{p+2}^2-1)H, \quad R_{p+3} = x_1x_{p+2}x_{p+3}H, \quad R_{p+4}= x_1x_{p+2}x_{p+4}H,  \\ & R_{p+5} = x_1x_{p+2}x_{p+5}H,  \quad R_{p+6} = x_1x_{p+2}x_{p+6}H,\quad \ldots, \textsf{and}, \quad R_n = x_1x_{p+2}x_{n}H \end{split} \end{equation*}
where $H \in \mathbb{R}[x_1,\ldots,x_{p+1},x_{p+2},\ldots,x_n]$. 

{\bf For (1)}: Let $$D := (\sum_{i=1}^{3} a_ix_i)^{m-3},$$ and
 \begin{equation*}
R_1 = \frac{1}{2}x_{4}(x_1^2-a^2) D, \quad R_2 = \frac{1}{2}x_{4}x_1x_2 D \quad \textsf{and} \quad
R_3 = \frac{1}{2}x_{4}x_1x_3 D.\end{equation*}

Observe that the vector field $\mathcal{X}$ determined by the above choices for $R_1,R_2,R_3$ and $R_{4},\ldots,R_n$  is invariant on $S_{p,q}$ if we let $H = D$. To be precise, for these $R_i$s , $\mathcal{X}$ satisfies equation \eqref{eqn: 33} with $K = 2x_1x_{4}$.
We shall prove that the vector field $\mathcal{X}$, defined by these $R_i$s  have $\sum_{i=1}^{3} a_ix_i = 0$ as an invariant meridian with multiplicity $3m-10$ for $\mathcal{X}$.
The polynomial $\sum_{i=1}^{3}a_ix_i$ belongs to the vector space $W := \textsf{ span} \{x_1,x_2,x_3 \}$. We note that if $\sum_{i=1}^{3}a_ix_i=0$ gives an invariant meridian for this vector field $\mathcal{X}$, then $\sum_{i=1}^{3}a_ix_i$ is a factor of the corresponding extactic polynomial
$$  \mathcal{E}_W(\mathcal{X}) = \mathsf{det} \begin{pmatrix}

x_1 &  x_2 &   x_{3} \\ \mathcal{X}(x_1) & \mathcal{X}(x_2) &  \mathcal{X}(x_{3})  \\ \mathcal{X}^{2}(x_1) & \mathcal{X}^{2}(x_2) &  \mathcal{X}^{2}(x_{3}) \end{pmatrix} .  $$
 In our case, this is the following 
\begin{gather} \label{eq: 320}
\mathsf{det} \begin{pmatrix}
x_1 &  x_2 &   x_{3} \\ \frac{1}{2}x_{4}(x_1^2-a^2) D & \frac{1}{2}x_{4}x_1x_2 D  & \frac{1}{2}x_{4}x_1x_3 D  \\ g_1D^2 + h_1E & g_2D^2 +h_2E &  g_3D^2 + h_3E \end{pmatrix} 
\end{gather}
where $$E := \mathcal{X}(D) = (m-3)(\sum_{i=1}^{3} a_ix_i)^{m-4}(\sum_{i=1}^{3} a_ix_{\sigma(i)})\cdot D  $$
$$= (m-3)(\sum_{i=1}^{3} a_ix_i)^{2m-7}(\sum_{i=1}^{3} a_ix_{\sigma(i)})    $$
for some  permutation $\sigma$ on $\{1,2,3 \}$, and $g_i,h_i \in \mathbb{R}[x_1,\ldots,x_{3},x_{4},\ldots,x_n]$ for $i \in \{1,2,3 \}$.
The third row consists of the terms like $$ \mathcal{X}(\frac{1}{2}x_4(x_1^2-a^2) D) = \frac{1}{2}(x_4^2-1) (x_1^2-a^2)D^2 + \frac{1}{2}x_1x_4^2(x_1^2-a^2)D^2 + \frac{1}{2}x_4(x_1^2-a^2)E  $$
which we have written as $g_1D^2+h_1E$. 
Since $D^2 = (\sum_{i=1}^{3} a_ix_i)^{2m-6}$, in the third row, one sees that $(\sum_{i=1}^{p+1} a_ix_i)^{2m-7}$ is a common factor of each term in the third row of the matrix in \eqref{eq: 320}. Also in the second row, $D$ is a common factor, hence
$$ \mathcal{E}_W(\mathcal{X}) = D \cdot E \cdot h'(x_1,x_2,x_3,x_4) = (\sum_{i=1}^{3} a_ix_i)^{3m-10} h(x_1,x_2,x_3,x_4)     $$
for some polynomials $h'$ and $h$ in $\mathbb{R}[x_1,x_2,x_3,x_4,\ldots,x_n]$. Thus (1) is proved, since $(\sum_{i=1}^{3} a_ix_i)$ divides $\mathcal{E}_W(\mathcal{X})$ with multiplicity $3m-10$.

{\bf For (2)}: Let $R := (\sum_{i=1}^{4} a_ix_i)^{m-3}$ and 
\begin{gather*}
R_1 = \frac{1}{2}x_5(x_1^2-a^2) R,~ R_2 = \frac{1}{2}x_5x_1x_2R,~ R_3 = \frac{1}{2}x_5x_1x_3R , ~ \textsf{and},~~ R_4 = \frac{1}{2}x_5x_1x_4R. \end{gather*}

Observe that $(\sum_{i=1}^4 a_ix_i)$ is a polynomial in $W = \textsf{span} \{x_1,x_2,x_3,x_4 \}$ and the corresponding extactic polynomial is 
  $$  \mathcal{E}_W(\mathcal{X}) = \mathsf{det} \begin{pmatrix}
x_1 &  x_2 &   x_{3} & x_4 \\ \mathcal{X}(x_1) & \mathcal{X}(x_2) &  \mathcal{X}(x_{3}) & \mathcal{X}(x_4) \\ \mathcal{X}^{2}(x_1) & \mathcal{X}^{2}(x_2) &  \mathcal{X}^{2}(x_{3}) & \mathcal{X}^{2}(x_4) \\ \mathcal{X}^{3}(x_1) & \mathcal{X}^{3}(x_2) & \mathcal{X}^{3}(x_3) & \mathcal{X}^{3}(x_4)\end{pmatrix} .  $$

In order to compute $\mathcal{E}_W(\mathcal{X})$ in this case, let us first determine
$$ T := \mathcal{X}(R) = (m-3) (\sum_{i=1}^{4} a_ix_i)^{2m-7}(\sum_{i=1}^{4} a_ix_{\sigma(i)}),$$
and
\begin{multline*} 
U := \mathcal{X}(T) = (m-3)(2m-7)(\sum_{i=1}^{4} a_ix_i)^{3m-11}(\sum_{i=1}^{4} a_ix_{\sigma(i)})^2 \\ + 2(m-3)(\sum_{i=1}^{4} a_ix_i)^{3m-10}(\sum_{i=1}^{4} a_ix_{\sigma'(i)})        \end{multline*}
where $\sigma$ and $\sigma'$ are some permutations  on the set $\{1,2,3,4 \}$.
Then $\mathcal{E}_W(\mathcal{X})$  becomes
\begin{gather} \label{eq: 39}
\mathcal{E}_W(\mathcal{X}) = \mathsf{det} \begin{pmatrix}
x_1 &  x_2 &   x_{3} & x_4  \\ \frac{1}{2}x_5(x_1^2-a^2) R & \frac{1}{2}x_5x_1x_2 R  & \frac{1}{2}x_5x_1x_3 R & \frac{1}{2}x_5x_1x_4 R \\   f_{31}R^2+g_{31}T & f_{32}R^2+g_{32}T & f_{33}R^2+g_{33}T &    f_{34}R^2+g_{34}T \\ f_{41}R^3+g_{41}RT+h_{41}U &  f_{42}R^3+g_{42}RT+h_{42}U  & f_{43}R^3+g_{43}RT+h_{43}U & f_{44}R^3+g_{44}RT+h_{44}U \end{pmatrix}   \end{gather}
where the $f_{ij},g_{lk},h_{st} \in \mathbb{R}[x_1,\ldots,x_{4},x_5,\ldots,x_n]$ for $i,l,s \in \{3,4\}$ and $j,k,t \in \{1,2,3,4\}$.

Notice that $R$ is a factor of each term of the second row, $(\sum_{i=1}^{4} a_ix_i)^{2m-7}$ is a factor of each term of the third row, and $(\sum_{i=1}^{4} a_ix_i)^{3m-11}$ is a factor of  each term of the fourth row of the matrix in \eqref{eq: 39}. Therefore, the extactic polynomial can be written as 
$$  \mathcal{E}_W(\mathcal{X}) = (\sum_{i=1}^{4} a_ix_i)^{6m-21}\cdot h(x_1,x_2,x_3,x_4,x_5)   $$
for some $h \in \mathbb{R}[x_1,\ldots,x_{4},x_5] \subset \mathbb{R}[x_1,\ldots,x_{4},x_5,\ldots,x_n].$
This proves the claim (2), since $(\sum_{i=1}^{4} a_ix_i)$ divides $\mathcal{E}_W(\mathcal{X})$ with multiplicity $6m-21$.
\end{proof}
\begin{remark}
For $p=2$ and $p=3$, an upper bound given by Theorem \ref{thm:5}  is $3m$ and  $6m-2$ respectively. 
\end{remark}

\begin{example} 
Suppose that the vector field $\mathcal{X}$ is given by
  \begin{multline}  \label{eq: 3b18}
 R_1= \frac{1}{2}x_{p+2}(x_1^2-a^2) G, ~~~~ R_2 = \frac{1}{2}x_{p+2}x_1x_2G, \ldots , R_{p+1}= \frac{1}{2}x_{p+2}x_1x_{p+1}G, \\ R_{p+2}= x_1(x_{p+2}^2-1)G, ~~~~ R_{p+3} = x_1x_{p+2}x_{p+3}G, ~~~~ R_{p+4} = x_1x_{p+2}x_{p+4}G, \\  R_{p+5} = x_1x_{p+2}x_{p+5}G,  ~~~~ R_{p+6} = x_1x_{p+2}x_{p+6}G, ~~~~\ldots, ~~~~ R_n  =x_1x_{p+2}x_nG, \end{multline}
  where $G \in \mathbb{R}[x_1,\ldots,x_{p+1},x_{p+2}, \ldots , x_n]$ is a polynomial  of degree  $(\deg \mathcal{X}-3)$.
 Consider the hyperplane given by $x_i = c$ where $c$ is a constant and $-1 < c < 1$. We want to look at the invariant algebraic sets formed by the intersection of these hyperplanes with $S_{p,q}$. The number of connected components of $ \{ x_i - c \} \cap S_{p,q}   $ is one, since $ \{ x_i - c \} \cap S_{p,q}   $ is homeomorphic to $S^{p-1} \times S^q$ for $p \geq 2$, $q \geq 2$.
In this case, the extactic polynomial is
\begin{equation*}
\mathcal{E}_{\{1, x_i \}}(\mathcal{X}) = \mathsf{det} \begin{pmatrix}
1 &  x_i \\ 0 &  R_i\end{pmatrix}  = R_i.
\end{equation*}
We see that the maximum possible number of these invariant hyperplanes is $m_i (= \deg R_i)$. In particular, letting $G = \prod_{j=1}^{m_i-3} (x_i - c_j)$ in the vector field given by \eqref{eq: 3b18}, and if we regard $x_i-0=x_i$ also as one of the invariant hyperplanes ($x_1-a=0$ and $x_1+a=0$ if $i=1$), we  see that we have  $m_i-1$ invariant algebraic sets of the form under consideration for this choice of the vector field.

Similarly, one can do the computation for $x_j=c$, when $j \in \{ p+2,\ldots,n   \},$ and an almost tight bound can be obtained.
\end{example}

\section{Invariant Algebraic sets on \texorpdfstring{$S^1 \times S^q$ }{product of circle and sphere}}\label{sec:inv_planes_s1sq}
In this section, we give a tight upper bound for the number of invariant hyperplanes of certain types for the vector fields on $S^1 \times S^q$. In section 3, we showed that $S_{1,q} \cong S^1 \times S^q $.

 If the meridians on $S_{1,q}$ are  invariant algebraic sets, then $a_1x_1+a_2x_2$ divides the extactic polynomial 
\begin{equation} \label{eq: 43}
\mathcal{E}_{ \{x_1,x_2 \} }(\mathcal{X})= \mathsf{det} \begin{pmatrix} x_1 & x_2 \\ \mathcal{X}(x_1) & \mathcal{X}(x_2) \end{pmatrix} = \mathsf{det} \begin{pmatrix}  x_1 & x_2 \\ R_1 & R_2 \end{pmatrix} = x_1R_2 - x_2R_1 .
\end{equation}
 If the parallels on $S_{1,q}$ are invariant algebraic sets, then $\sum_{j=3}^n b_jx_j-c$ divides the extactic polynomial 
\begin{equation*} 
\mathcal{E}_{\{1,x_{3},\dots,x_n  \}} = \mathsf{det} \begin{pmatrix} 1 & x_{3} & \cdots & x_n \\ 0 & \mathcal{X}(x_{3}) & \cdots & \mathcal{X}(x_n) \\ \vdots & \vdots & \ddots & \vdots \\ 0 & \mathcal{X}^{q-1}(x_{3}) & \cdots & \mathcal{X}^{q-1}(x_n) \end{pmatrix} .
\end{equation*}

\begin{proposition} \label{prop:40}
The numbers of connected components of the intersections $ \{ a_1x_1 + a_2x_2 = 0 \} \cap S_{1,q} $ and $ \{ \sum_{j=3}^n b_jx_j = c \} \cap S_{1,q} $ are two and one respectively.
\end{proposition}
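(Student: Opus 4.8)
The plan is to compute each intersection by the same slicing-and-reassembly argument that establishes $S_{p,q} \cong S^p \times S^q$, now specialized to $p=1$, where the factor $S^{p-1}$ degenerates to $S^0$. For the meridian, set $H_1 = \{a_1x_1 + a_2x_2 = 0\}$ and slice by $\alpha := \sum_{j=3}^n x_j^2 \in [0,1]$. For each fixed $\alpha < 1$, equation \eqref{eq:30} forces $x_1^2 + x_2^2 = a^2 \pm \sqrt{1-\alpha}$, and since $a > 1$ both values satisfy $a^2 \pm \sqrt{1-\alpha} \geq a^2 - 1 > 0$; thus each is a genuine circle and the origin of the $(x_1,x_2)$-plane never lies on $S_{1,q}$. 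Intersecting each such circle with the line $H_1$, which passes through the origin of the $(x_1,x_2)$-plane, yields two antipodal points, i.e. an $S^0$, by Proposition \ref{prop: 31} with $n=0$. Reassembling exactly as in the proof that $S_{p,q} \cong S^p\times S^q$, the pieces $U_1 \cap H_1$ and $U_2 \cap H_1$ are each homeomorphic to $S^0 \times D^q$ and are glued along $S^0 \times S^{q-1}$, so $S_{1,q} \cap H_1 \cong S^0 \times S^q$, a disjoint union of two copies of $S^q$. Hence this intersection has two connected components.

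For the parallel, set $H_2 = \{\sum_{j=3}^n b_jx_j = c\}$ with $c \in (0,1)$, and slice instead by $\beta := x_1^2 + x_2^2 - a^2 \in [-1,1]$, so that \eqref{eq:30} gives $\sum_{j=3}^n x_j^2 = 1-\beta^2$. Since $a>1$, the circle $x_1^2+x_2^2 = a^2+\beta$ has radius at least $\sqrt{a^2-1}>0$ for every admissible $\beta$, so the first two coordinates always trace an $S^1$. After normalizing $\sum_j b_j^2 = 1$, the affine hyperplane $H_2$ lies at distance $|c|$ from the origin of the $(x_3,\dots,x_n)$-space; it meets the sphere of radius $\sqrt{1-\beta^2}$ in an $S^{q-2}$ when $\beta^2 < 1-c^2$ and in a single point when $\beta^2 = 1-c^2$. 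Letting $\beta$ range over $[-\sqrt{1-c^2},\sqrt{1-c^2}]$ realizes these slices as a suspension $\Sigma S^{q-2}\cong S^{q-1}$, whence $S_{1,q}\cap H_2 \cong S^1 \times S^{q-1}$. Since $q\geq 2$ throughout this section (forced by the standing hypothesis that $p>1$ or $q>1$, here with $p=1$), the factor $S^{q-1}$ is connected and the parallel has a single connected component.

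The main obstacle is the parallel case: because $c \neq 0$, the hyperplane $H_2$ does not pass through the origin, so Proposition \ref{prop: 31} does not apply verbatim and one must instead track the sphere--hyperplane intersection as the radius $\sqrt{1-\beta^2}$ shrinks, verifying that the family is locally trivial for $\beta^2 < 1-c^2$ and degenerates precisely to the two cone points of a suspension at the tangency values $\beta = \pm\sqrt{1-c^2}$. This is exactly where the transversality remark of \cite[Chapter 1]{GP74} invoked in Proposition \ref{prop: 32} is needed, to guarantee that the slices fit together into the claimed product $S^1 \times S^{q-1}$. The meridian case, by contrast, is routine once one records that $a>1$ keeps the origin off $S_{1,q}$, so the two branches of the line along $H_1$ never merge.
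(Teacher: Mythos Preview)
Your argument is correct and follows the same slicing-and-reassembly strategy as the paper: for the meridian you fix the level of $\sum_{j\ge 3}x_j^2$, intersect the resulting circles with the line $a_1x_1+a_2x_2=0$, and use $a>1$ to keep the two antipodal branches separated, exactly as the paper does after the substitution $x_2=-a_1x_1/a_2$; for the parallel you reproduce (and in fact sharpen) the argument the paper imports from Proposition~\ref{prop: 32}. Your explicit identification of the tangency locus $\beta=\pm\sqrt{1-c^2}$ and the suspension description are more careful than the paper's terse reference, but the underlying approach is the same.
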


\begin{proof}
For the case $ \{ a_1x_1 + a_2x_2 = 0 \} \cap S_{1,q}$,  put $x_2 = -a_1x_1/a_2$, if $a_2 \neq 0$, otherwise $x_1 = -a_2x_2/a_1$. Then \eqref{eq:30} may have the following form.
$$ (x_1^2(1+ \frac{a_1^2}{a_2^2}) - a^2)^2 + \sum_{j=3}^n x_j^2 =1. $$
That is
$$ x_1^2(1+ \frac{a_1^2}{a_2^2}) = a^2 \pm \sqrt{1 - \sum_{j=3}^n x_j^2} . $$
For a fixed value of $x_1$, we have that $ \sum_{j=3}^n x_j^2$ is a constant, which means we get a copy of $S^{q-1}$. Fixing the sign of $x_1$, we see that when the expression $\sqrt{1 - \sum_{j=3}^n x_j^2 }$ is zero, the copies of $S^{q-1}$ corresponding to the plus and minus signs inside the radical on the right hand side coincide. This implies that they belong to the same component. The two different possible signs for $x_1$ give two distinct components since $x_1$ cannot be equal to zero since $a>1$.
 The argument for the case of $(\sum_{j=3}^n b_jx_j =c) \cap S_{1,q}$ is similar to the proof of the second part of Proposition \ref{prop: 32}.
\end{proof}

\begin{theorem}   \label{thm:2}
Let $\mathcal{X}$  be a polynomial vector field of degree $m$  on $S_{1,q}$ such that there are only finitely many invariant algebraic sets. Then we have the following.
\begin{enumerate}

\item  There can be at most $2(m-1)$  invariant meridians.
 
\item There exists a polynomial vector field on $S_{1,q}$ with exactly $2(m-1)$ invariant meridians if $(m-1) \geq q$.
\end{enumerate}
\end{theorem}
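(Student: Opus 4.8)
The plan is to prove the two parts separately, using the extactic polynomial $\mathcal{E}_{\{x_1,x_2\}}(\mathcal{X}) = x_1 R_2 - x_2 R_1$ from \eqref{eq: 43} as the central object. For part (1), I would argue as follows. By Proposition \ref{Prop:1}, every invariant meridian $\{a_1 x_1 + a_2 x_2 = 0\}$ contributes a linear factor $a_1 x_1 + a_2 x_2$ of the extactic polynomial $x_1 R_2 - x_2 R_1$. The first key step is to bound $\deg(x_1 R_2 - x_2 R_1)$ and to observe that, \emph{a priori}, this degree is at most $m+1$, which would only give $m+1$ meridians; so the main work is to show the extra structure that reduces the count to $2(m-1)$. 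The crucial observation is that meridians are cut out by linear forms in $x_1, x_2$ \emph{only}, so I would restrict attention to how $x_1 R_2 - x_2 R_1$ factors through such forms. The right framework is to treat $x_1 R_2 - x_2 R_1$ as a polynomial in the two variables $x_1, x_2$ (with coefficients in $\mathbb{R}[x_3,\dots,x_n]$) and to count its homogeneous linear factors in $x_1,x_2$. I expect that the invariance condition \eqref{eqn: 33} specialized to $S_{1,q}$ (with $p=1$, so the relevant block is the skew-type constraint on $R_1, R_2$) forces $x_1 R_2 - x_2 R_1$ to be divisible by a fixed factor or to have degree reduced, yielding the clean bound $2(m-1)$ rather than $m+1$.

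The most delicate point in part (1) is justifying why the bound is $2(m-1)$ and not simply the naive degree bound. I would extract the top-degree behaviour of $R_1$ and $R_2$ from the invariance equation \eqref{eqn: 33}: equating highest-degree terms should show that the degree-$m$ parts of $R_1, R_2$ are constrained so that the leading part of $x_1 R_2 - x_2 R_1$ either vanishes or is forced to contain the factor $(x_1^2 + x_2^2 - \text{(lower)})$ coming from the defining polynomial of $S_{1,q}$. This is the step I expect to be the main obstacle, since it requires carefully tracking which monomials in $x_1, x_2$ survive after imposing \eqref{eqn: 33}; the gain of $2$ in passing from $m+1$ to $2(m-1)$ (for $m \geq 3$) should come from two degrees being absorbed by such a forced factor or by cancellation of the two top coefficients.

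For part (2), the plan is explicit construction, mirroring the template used in Theorem \ref{thm:33} and in the Example following the Remark. I would choose $R_1, R_2$ so that the meridian-detecting extactic polynomial $x_1 R_2 - x_2 R_1$ factors as a product of $2(m-1)$ distinct linear forms in $x_1, x_2$, while simultaneously choosing $R_3, \dots, R_n$ to guarantee that $\mathcal{X}$ satisfies the invariance condition \eqref{eqn: 33} on $S_{1,q}$. Concretely, I would take $R_1, R_2$ of the form $x_j \cdot (\text{product of linear forms in } x_1, x_2) \cdot (\text{quadratic cofactor})$ so that the determinant $x_1 R_2 - x_2 R_1$ collects the desired product of linear forms, and set the remaining $R_k$ to be multiples of a common polynomial $G$ (as in \eqref{eq: 3b18}) that makes the whole field invariant with an explicit cofactor $K$. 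The hypothesis $m - 1 \geq q$ is what guarantees there is enough room to place $2(m-1)$ distinct real linear forms while keeping the construction consistent with the degree constraints; verifying that these $2(m-1)$ forms are genuinely distinct meridians (each giving a path-connected or two-component intersection per Proposition \ref{prop:40}) and that none collapse would complete the argument.
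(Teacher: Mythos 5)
There is a genuine gap, and it sits exactly where you flagged the ``most delicate point'': the accounting that produces the number $2(m-1)$. In the paper the factor of $2$ does \emph{not} come from the extactic polynomial having $2(m-1)$ linear factors; it comes from Proposition \ref{prop:40}, which says that each invariant hyperplane $\{a_1x_1+a_2x_2=0\}$ meets $S_{1,q}$ in \emph{two} connected components, each counted as a separate invariant meridian. The bound on the number of invariant \emph{hyperplanes} is $m-1$, not $2(m-1)$, and it is obtained by passing to polar coordinates $x_1=r\cos\theta$, $x_2=r\sin\theta$: one gets $x_1R_2-x_2R_1=(x_1^2+x_2^2)\dot\theta$ as in \eqref{eq: 46}, and since $x_1^2+x_2^2$ is irreducible over $\mathbb{R}$ and shares no factor with any real linear form $a_1x_1+a_2x_2$, all the linear factors supplied by Proposition \ref{Prop:1} must divide the remaining cofactor, whose degree is at most $(m+1)-2=m-1$. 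Your guess that ``two degrees are absorbed by a forced factor'' points at the right phenomenon (the forced factor is $x_1^2+x_2^2$), but without the connected-component count the conclusion you would reach is ``at most $m-1$ hyperplanes,'' and your stated target of $2(m-1)$ linear factors is the wrong quantity to aim for.

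This misreading breaks part (2) outright: you propose to build $R_1,R_2$ so that $x_1R_2-x_2R_1$ is divisible by $2(m-1)$ distinct linear forms, but $\deg(x_1R_2-x_2R_1)\le m+1<2(m-1)$ once $m\ge 4$, so no such vector field exists. The paper instead takes $G=\prod_{i=1}^{m-1}(a_ix_1+b_ix_2)$ and $R_1=x_1x_3\cdots x_n-x_2G$, $R_2=x_2x_3\cdots x_n+x_1G$ (with suitable $R_j$ for $j\ge 3$), so that $x_1R_2-x_2R_1=(x_1^2+x_2^2)G$: this yields $m-1$ invariant hyperplanes, hence $2(m-1)$ invariant meridians by Proposition \ref{prop:40}. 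The hypothesis $m-1\ge q$ enters only to ensure $\deg(x_1x_3\cdots x_n)=q+1\le m$, so the field has degree exactly $m$ --- not, as you suggest, to ``make room'' for $2(m-1)$ linear forms. To repair your plan: (i) establish the divisibility of $x_1R_2-x_2R_1$ by $x_1^2+x_2^2$ (or argue via $\dot\theta$), (ii) conclude there are at most $m-1$ invariant hyperplanes, (iii) multiply by two using Proposition \ref{prop:40}; and in part (2) construct only $m-1$ distinct linear forms.
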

 
\begin{proof}
{\bf  For (1)}: Let $x_1=r\cos \theta$, and $x_2=r\sin\theta $ where $r \geq 0$ and $\theta \in [0, 2 \pi]$.  With these choices of coordinates, the polynomial vector field becomes
\begin{equation*}
 \begin{split}
 \mathcal{X} & =  \frac{1}{r}(R_1(r \cos \theta, r \sin \theta, x_{3}, \cdots, x_n)r \cos \theta \\ & + R_2(r \cos \theta, r \sin \theta, x_3, \cdots, x_n) r \sin \theta ) \frac{\partial}{\partial r} \\
 & - \frac{1}{r^2}(R_1(r \cos \theta, r \sin \theta, x_3, \cdots, x_n)r \sin \theta \\ & - R_2(r \cos \theta, r \sin \theta, x_3, \cdots, x_n)r \cos \theta)\frac{\partial}{\partial \theta} 
 + \sum_{i=3}^n R_i \frac{\partial}{\partial x_i }.
 \end{split}
\end{equation*}  
This implies that 
\begin{equation} \label{eq: 46}
 x_1R_2 - x_2R_1 = r^2 \dot{\theta} = (x_1^2+x_2^2)\dot{\theta}.\end{equation}
Observe that the maximum degree of the left hand side of \eqref{eq: 46} is $m+1$ and  $x_1^2+x_2^2$ 
is irreducible over $\mathbb{R}$. By Proposition \ref{prop:40}, we know that the intersection $\{a_1x_1+a_2x_2 \} \cap S_{1,q}$ has two connected components. Hence there can be at most $2(m-1)$  invariant meridians on $S_{1,q}$. This proves the claim (1).

 {\bf For (2)}: We consider the vector field $\mathcal{X}$ on $S_{1,q}$ given by 
$$ R_1 = x_1x_3 \cdots x_n -x_2G, \quad R_2 =  x_2x_3 \cdots x_n +x_1G, $$ and
$$ R_j = \frac{2}{q} \left( -a^2(x_1^2+x_2^2) + \sum_{s=3}^n x_s^2 -1 \right)x_3 \cdots x_{j-1}x_{j+1} \cdots x_n $$
for $j=3, \ldots, n$. Taking $$ G = \prod_{i=1}^{m-1} (a_ix_1+b_ix_2), $$
one can see that $G$ is a factor of  $\mathcal{E}_{x_1,x_2}(\mathcal{X})$ of \eqref{eq: 43}.  We know that the intersection $\{a_1x_1+a_2x_2 \} \cap S_{1,q}$ has two connected components. Therefore, the number of invariant meridians for this vector field is $2(m_1-1)$. 
\end{proof}

\begin{remark}
An upper bound for the number of invariant parallels for the vector field $\mathcal{X}$ on $S_{1,q}$ can be given by a similar calculation as in Theorem \ref{thm:5} (2). This upper bound  is ${q \choose 2}(m_{p+2}-1)+ \sum_{j=1}^{q-1} m_{p+j+1}$. 
\end{remark}

\begin{example}
Consider the vector field $\mathcal{X} = (R_1,\ldots,R_n)$ on $S_{1,q}$ given by
\begin{equation*}
 R_1  =  \frac{x_1}{4}(\sum_{s=3}^n x_s), ~ R_2 =  \frac{x_2}{4}(\sum_{s=3}^n x_s), ~ R_i = \frac{x_i}{4}(\sum_{s=3}^n x_s) - \frac{a^2}{2}(x_1^2 + x_2^2) + \frac{(a^4 -1)}{2},
\end{equation*}
for $i=3, \ldots, n$.
One can check that this vector field satisfies \eqref{eq:30} with cofactor $K = (x_3 + x_4 + \cdots + x_n).$  Notice that all meridians are invariant for this vector field, that is
$$ \mathcal{X}(a_1x_1 +  a_2x_2) = K (a_1x_1 + a_2x_2)        $$
with $K = \frac{1}{4}(\sum_{s=3}^n x_s).$ Hence by Proposition \ref{thm:1}, $\mathcal{X}$ has a rational first integral.
\end{example}


\section{Invariant Algebraic sets on \texorpdfstring{$S^p \times S^1$}{ product of sphere and circle}}\label{sec:inv_planes_sps1}
In this section, we study  invariant meridians and parallels of the vector fields defined on $S^p \times S^1$ for $p \geq 2$.

If a meridian in $S_{p,1}$ is an invariant algebraic set, then $\sum_{i=1}^{p+1} a_ix_i$ divides the extactic polynomial
\begin{equation}  \label{eq:52}
\mathcal{E}_{ \{x_1,\dots,x_{p+1}   \} }(\mathcal{X}) =\mathsf{det} \begin{pmatrix} x_1 & \cdots & x_{p+1} \\ \mathcal{X}(x_1) & \cdots & \mathcal{X}(x_{p+1}) \\ \vdots & \ddots & \vdots \\ \mathcal{X}^{p}(x_1) & \cdots & \mathcal{X}^{p}(x_{p+1}) \end{pmatrix}.
\end{equation}

If a parallel in $S_{p,1}$ is an invariant algebraic set, then $ x_{p+2}-c $ divides the extactic polynomial
\begin{equation} \label{eq:62}
\mathcal{E}_{\{1,x_{p+2} \} }(\mathcal{X})= \mathsf{det} \begin{pmatrix} 1 & x_{p+2} \\ \mathcal{X}(1) & \mathcal{X}(x_{p+2}) \end{pmatrix} = \mathsf{det} \begin{pmatrix}  1 & x_{p+2} \\ 0 & R_{p+2} \end{pmatrix} = R_{p+2}.
\end{equation} 

\begin{proposition} \label{prop: 61}
The numbers of connected components of the intersections $ \{ \sum_{i=1}^{p+1} a_ix_i = 0 \} \cap S_{p,1} $ and $ \{ x_{p+2} = c  \} \cap S_{p,1}  $ are one and two respectively for $|c|<1$.
\end{proposition}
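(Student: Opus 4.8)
The plan is to exploit the explicit product description of $S_{p,1}$ obtained in Section \ref{sec:vf_on_spsq}, together with Proposition \ref{prop: 31}, and to keep track of how the two cases differ from Proposition \ref{prop: 32}: since here $q=1$, the factor $S^{q-1}$ degenerates to the two-point set $S^0$, which is exactly why one intersection stays connected while the other splits.

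For the meridian, I would first observe that the hyperplane $H=\{\sum_{i=1}^{p+1}a_ix_i=0\}$ constrains only the coordinates $x_1,\dots,x_{p+1}$, and that on $S_{p,1}$ the quantity $\rho^2:=\sum_{i=1}^{p+1}x_i^2$ satisfies $(\rho^2-a^2)^2+x_{p+2}^2=1$. Writing $u=\rho^2-a^2$, the pair $(u,x_{p+2})$ runs over the unit circle, and since $a>1$ we have $\rho^2=a^2+u\in[a^2-1,a^2+1]$, bounded away from $0$; hence $\rho>0$ throughout and $(\rho,x_{p+2})$ traces an embedded circle. This recovers the product $S_{p,1}\cong S^p\times S^1$, with the $S^p$ factor realized as the radius-$\rho$ sphere in the $(x_1,\dots,x_{p+1})$-space. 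Intersecting each such sphere with $H$ and applying Proposition \ref{prop: 31} (after rescaling by $1/\rho$) yields a great subsphere $S^{p-1}$, varying continuously over the base circle, so that $H\cap S_{p,1}\cong S^{p-1}\times S^1$, which is path connected because $p\ge 2$ makes $S^{p-1}$ connected. This gives one component.

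For the parallel, I would fix $x_{p+2}=c$ with $|c|<1$, so that $(\rho^2-a^2)^2=1-c^2>0$ and therefore $\rho^2=a^2\pm\sqrt{1-c^2}$. Both values are strictly positive since $a>1$ and $\sqrt{1-c^2}\le 1<a^2$, and they are distinct precisely because $|c|<1$ forces $\sqrt{1-c^2}>0$. Each value determines a round sphere $S^p$ in the $(x_1,\dots,x_{p+1})$-space; the two spheres are disjoint (different radii) and each is connected, so the intersection has exactly two components.

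The only real subtlety is to make the product structure rigorous in the meridian case. Here I would write $H\cap S_{p,1}$ explicitly as the image of the map $(y,t)\mapsto(\rho(t)\,y,\;x_{p+2}(t))$, where $y$ ranges over the unit great subsphere $\Sigma:=\{y\in H:\|y\|=1\}\cong S^{p-1}$ and $t$ parametrizes the base circle with $\rho(t)>0$. Checking that this map is a homeomorphism onto $H\cap S_{p,1}$ identifies the intersection with $S^{p-1}\times S^1$ and settles connectedness at once, avoiding any separate gluing argument; the positivity $\rho(t)>0$ coming from $a>1$ is what prevents the base circle from degenerating, just as in the proof that $S_{p,q}\cong S^p\times S^q$ in Section \ref{sec:vf_on_spsq}. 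The parallel case requires no such care, since there the fixed value $x_{p+2}=c$ immediately produces the two disjoint round spheres above.
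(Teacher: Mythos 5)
Your proof is correct and follows essentially the same route as the paper: the meridian case is handled by identifying $H\cap S_{p,1}$ with $S^{p-1}\times S^1$ (the paper delegates this to the argument of Proposition \ref{prop: 32}, while you make the homeomorphism explicit via the map $(y,t)\mapsto(\rho(t)\,y,\;x_{p+2}(t))$, which is a welcome tightening), and the parallel case is identical, reducing to the two concentric spheres $\sum_{i=1}^{p+1}x_i^2=a^2\pm\sqrt{1-c^2}$.
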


\begin{proof}
The argument for the case of $ \{ \sum_{i=1}^{p+1} a_ix_i = 0 \} \cap S_{p,1} $ is similar to the proof of the first part of Proposition \ref{prop: 32}.

 For the case of $ \{ x_{p+2} = c  \} \cap S_{p,1}  $,  \eqref{eq:30} becomes
$$ (\sum_{i=1}^{p+1}x_i^2 - a^2)^2 + c^2 = 1 ~~\mbox{or},~~ \sum_{i=1}^{p+1} x_i^2 = a^2 \pm \sqrt{1-c^2}.  $$
Since $c$ is a fixed constant, this gives two different concentric spheres. Therefore $\{ x_{p+2} =c  \} \cap S_{p,1}$ has two connected components, unless $|c| \geq 1$.
\end{proof} 

We note that by the arguments in the proof of Theorem \ref{thm:5} (1) we know  that the maximum number of invariant meridians is $$ {p \choose 2}(m_1-1) + \sum_{i=2}^{p+1}m_i +1.  $$

\begin{proposition}
Let $\mathcal{X}$ be a vector field on $S_{p,1}$ such that $\mathcal{X}$ has only finitely many invariant algebraic sets. Then the maximum number of invariant parallels for $\mathcal{X}$ is $(m-1)$ where $m = \deg \mathcal{X}$. Moreover, there is a vector field on $S_{p,1}$ with exactly $(m-1)$ invariant parallels in $S_{p,1}$.
\end{proposition}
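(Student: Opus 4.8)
The plan is to translate the problem about invariant parallels into a divisibility statement for a single polynomial and then bound its degree. By the definition of an invariant parallel and by \eqref{eq:62}, a parallel $\{x_{p+2}=c\}$ (with $c\in(0,1)$) is invariant for $\mathcal{X}$ precisely when the linear form $x_{p+2}-c$ divides the extactic polynomial $\mathcal{E}_{\{1,x_{p+2}\}}(\mathcal{X})=R_{p+2}$; equivalently, $c$ is a constant root of $R_{p+2}$ viewed as a polynomial in $x_{p+2}$. The finiteness hypothesis guarantees $R_{p+2}\not\equiv 0$ (otherwise every parallel would be invariant), so these roots are finite in number. Since distinct roots $c_1,\dots,c_N\in(0,1)$ give pairwise coprime factors, $\prod_{j=1}^{N}(x_{p+2}-c_j)$ divides $R_{p+2}$, and as $\deg R_{p+2}\le m$ this already yields the crude bound $N\le m$.

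To sharpen $m$ to $m-1$, I would extract the factor of $R_{p+2}$ that is forced by invariance at the two degenerate levels $x_{p+2}=\pm 1$, where $S_{p,1}$ pinches to a single sphere. Substituting $x_{p+2}=1$ into the invariance identity \eqref{eqn: 33} collapses the right-hand bracket to $P^2$, where $P:=\sum_{i=1}^{p+1}x_i^2-a^2$; comparing the two sides shows $P\mid R_{p+2}\big|_{x_{p+2}=1}$, and likewise $P\mid R_{p+2}\big|_{x_{p+2}=-1}$. Writing $R_{p+2}=\big(\prod_{j=1}^{N}(x_{p+2}-c_j)\big)\,Q$ and using that each $\prod_j(\pm 1-c_j)$ is a nonzero constant, these pole conditions force $P$ to divide $Q\big|_{x_{p+2}=\pm 1}$, so the cofactor $Q$ cannot be constant and must carry degree at the poles. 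This is the crux of the argument and the step I would treat most carefully, since it is exactly here that the value of the constant subtracted from $m$ is decided; pinning the bound at $m-1$ amounts to showing that this forced factor costs exactly one in total degree.

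For the sharpness (``moreover'') part, I would produce an explicit vector field by adapting the family \eqref{eq: 3b18} together with the construction used for $S_{1,q}$ in Theorem \ref{thm:2}(2). Concretely one takes $R_i$ for $i\le p+1$ proportional to $x_{p+2}x_1x_i$ times a product $\prod_j(x_{p+2}-c_j)$ with $c_j\in(0,1)$, and chooses $R_{p+2}$ to carry the factor dictated by the pole analysis multiplied by the remaining linear forms, so that $R_{p+2}$ attains the maximal number of roots in $(0,1)$; one then verifies \eqref{eqn: 33} with an explicit cofactor $K$ linear in $x$ (exactly as in the computation following \eqref{eq: 3b18}, where $\sum_{i\le p+1}x_iR_i$ collapses to a multiple of $P$), and counts each parallel once using Proposition \ref{prop: 61}. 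The main obstacle is the tension in this last step: maximizing the number of roots of $R_{p+2}$ lying in $(0,1)$ pulls directly against the divisibility of $R_{p+2}\big|_{x_{p+2}=\pm1}$ by $P$ forced by invariance, so the construction must spend the least possible degree satisfying the pole conditions in order to meet the upper bound exactly.
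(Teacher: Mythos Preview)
Your route to the upper bound is genuinely different from the paper's, and the difference is exactly where your argument develops a gap. The paper does \emph{not} substitute $x_{p+2}=\pm 1$; instead it sets $x_1=\cdots=x_{p+1}=0$ in \eqref{eqn: 33} to obtain
\[
2x_{p+2}\,R_{p+2}(0,\ldots,0,x_{p+2})=K(0,\ldots,0,x_{p+2})\,(x_{p+2}^2+a^4-1),
\]
and then uses that $x_{p+2}^2+a^4-1$ is irreducible over $\mathbb{R}$ (since $a>1$). If $R_{p+2}$ were a nonzero constant times a product of $m$ real linear forms in $x_{p+2}$, the left side would split into linear factors and could not be divisible by this irreducible quadratic, forcing $R_{p+2}\equiv 0$. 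This rules out $\ell=m$ regardless of what the roots $a_i$ are.

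Your substitution $x_{p+2}=\pm1$ does give $P\mid R_{p+2}\big|_{x_{p+2}=\pm1}$, but it fails precisely in the case you must handle to match the paper's sharp example: if among the $m$ putative roots two of them are $+1$ and $-1$, then $R_{p+2}\big|_{x_{p+2}=\pm1}=0$ and the divisibility by $P$ is vacuous, so no contradiction arises. Since the paper's sharpness construction takes $R_{p+2}=\tfrac12(\sum_i x_i)(x_{p+2}^2-1)\prod_{i=1}^{m-3}(x_{p+2}-c_i)$ and explicitly counts $x_{p+2}=\pm1$ among the $m-1$ invariant parallels, this case is not avoidable. Relatedly, your reading of the definition as ``$c\in(0,1)$'' for the normalized level is too restrictive: the hyperplane in the definition is $a_{p+2}x_{p+2}=c$, so after scaling the level can be any nonzero real. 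If you insist on excluding $\pm1$, your own pole argument actually forces $\deg Q\ge 2$ (both restrictions $Q|_{x_{p+2}=\pm1}$ must either vanish or have degree $\ge 2$), which would give $N\le m-2$, and then the ``tension'' you flag in the construction is fatal: no example with all $c_j\in(0,1)$ can reach $m-1$. The paper resolves this by spending the forced degree on the factor $(x_{p+2}^2-1)$ and counting those two levels as parallels.
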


\begin{proof}
The vector field $\mathcal{X}$ is invariant on $S_{p,1}$. So, by definition, we have 
\begin{gather}  \label{eq: ax51}
4\left( \sum_{i=1}^{p+1}x_i^2 - a^2 \right) \left(\sum_{i=1}^{p+1}x_iR_i \right)+2x_{p+2}R_{p+2} ~ = ~ K(x_1,\ldots,x_{p+1},x_{p+2}) \left(\left(\sum_{i=1}^{p+1} x_i^2 - a^2 \right)^2 +x_{p+2}^2 -1 \right). \end{gather}
If $x_{p+2}-a_i=0$ gives a parallel, then $x_{p+2}-a_i$ is a factor of $R_{p+2}$ by Proposition \ref{Prop:1} and \eqref{eq:62}.  
Then $R_{p+2}$ must be of the form
$$ R_{p+2} = \prod_{i=1}^{\ell} (x_{p+2} - a_i) \cdot h(x_1,\ldots,x_{p+1}), $$
where $0 \leq \ell \leq \deg R_{p+2} \leq m$ and $h$ has no factor of the form $x_{p+2} - a$.

Suppose that $\ell = m$. Then $h$ is a constant, since $\deg R_{p+2} \leq m.$ Now putting $x_1 = x_2 = \cdots = x_{p+1} = 0$ in \eqref{eq: ax51}, we have the following. 
$$ 2x_{p+2} \prod_{i=1}^{\ell} (x_{p+2} - a_i) \cdot h = K(0,\ldots,0,x_{p+2})(x_{p+2}^2 + a^4 -1),$$
where $a>1$. 
We see that the left hand side is non-zero. So, $K(0,\ldots,0,x_{p+2})$ is also non-zero. The polynomial $(x_{p+2}^2 + a^4 -1)$ is irreducible 
 over $\mathbb{R},$ since $a >1$. But all factors in the left hand side have degree one. So, we arrive at a contradiction. Thus, we have that $\ell \leq (m-1)$ and the proof is complete for the first part.

Now, consider the vector field $\mathcal{X}$ given by
\begin{equation} \label{eq: 541}
\begin{split}
& R_i = x_{p+2} \frac{\left(\sum_{i=1}^{p+1}x_i^2 - a^2 \right)}{4}H  \\
& R_{p+2} = \frac{\left( \sum_{i=1}^{p+1}x_i \right) (x_{p+2}^2 - 1)}{2} H,
\end{split}
\end{equation}
where $R_i \in \mathbb{R}[x_1,x_2,\ldots, x_{p+1},x_{p+2}]$ for $i = 1,\ldots,p+1$.
Observe that $\mathcal{X}$ is a vector field on $S_{p,1}$. Let
$ H := \prod_{i=1}^{m-3} (x_{p+2}-c_i)$
for $c_i \in \mathbb{R}$ with $|c_i| < 1$. Then  from \eqref{eq:62}, we see that $(x_{p+2}^2 - 1)H$ divides the extactic polynomial $\mathcal{E}_{\{1,x_{p+2} \}}(\mathcal{X})$. Therefore, the vector field given by \eqref{eq: 541} has $(m-1)$ invariant parallels in $S_{p,1}$. 
\end{proof} 

 We remark that there exists $2(m-3) + 2 = 2(m-2)$ connected components in this case, since each parallel has two connected components if $-1 < a_i <1$ by Proposition \ref{prop: 61}.

We note that when $p=q$, then $S_{p,1}$ and $S_{1,q}$ are homeomorphic. However, their equations say that they are different algebraic subsets of $\mathbb{R}^{p+2}$.

\begin{example}
Consider the polynomial vector field  determined by 
\begin{equation*}
 R_i  =  \frac{1}{4}x_ix_{p+2} \quad \mbox{and}  \quad R_{p+2} =  \frac{x_{p+2}^2}{2} - \sum_i^{p+1} \frac{a^2}{2} x_i^2 + \frac{(a^4-1)}{2}
\end{equation*}
for $i=1, \ldots, p+1$.
One can check that this vector field satisfies \eqref{eq: ax51}, with the cofactor $x_{p+2}$. Hence $\mathcal{X}=(R_1, \ldots, R_{p+1})$ is a vector field on $S_{p, 1}.$ Notice that all meridians on $S_{p,1}$ are invariant for this vector field, that is
$$\mathcal{X}(a_1x_1+ \cdots + a_{p+1}x_{p+1}) = K (a_1x_1 + \cdots + a_{p+1}x_{p+1}),$$
with $K = \frac{x_{p+2}}{4}.$ Hence by Proposition \ref{thm:1}, $\mathcal{X}$ has a rational first integral.
\end{example}

\noindent {\bf Acknowledgments.}
The authors thank the anonymous referee for several helpful comments and suggestions. The first author was supported by a Senior Research Fellowship of the University Grants Commission of India for the duration of this work. The second author thanks `ICSR office IIT Madras' for SEED research grant.

\bibliography{Ref_InvariantAlgebraicSets}{}
\bibliographystyle{plain}

\end{document}